\newtheorem{theorem}{Theorem}[section]
\newtheorem{lemma}[theorem]{Lemma}
\newtheorem{proposition}[theorem]{Proposition}
\newtheorem{corollary}[theorem]{Corollary}
\newtheorem{remark}[theorem]{Remark}
\newtheorem{example}[theorem]{Example}
\newtheorem{definition}[theorem]{Definition}
\def\C{ \mathbb{C}}
\def\R{ \mathbb{R}}
\def\N{ \mathbb{N}}
\def\S{ \mathbb{S}}
\def\codim{ {\rm codim}}
\def\corank{ {\rm corank}}
\def\Reg{ {\rm Reg}}
\def\Rank{ {\rm Rank}}
\def\Grad{ {\rm Grad}}
\thanks{The research was partially supported by the post-doctoral FAPESP 
2013/18706-7 (for the first author) and  FAPESP Proc. 2014/00304-2 and CNPq Proc.
305651/2011-0 (for the second author).}
\def\rond{\mathaccent"7017}
\begin{document}

\large

\title[]{\small ON SINGULAR VARIETIES ASSOCIATED TO A POLYNOMIAL MAPPING \\ FROM $\C^n$ TO $\C^{n-1}$}
\makeatother

\author{ Nguyen Thi Bich Thuy and Maria Aparecida Soares Ruas}

\address[Maria Aparecida Soares Ruas]{Universidade de S\~ao Paulo, Instituto de Ci\^encias Matem\'aticas e de Computa{\c c}\~ao - USP, Avenida Trabalhador S\~ao-Carlense, 400 - Centro, Brazil.}
\email{maasruas@icmc.usp.br}

\address[Nguyen Thi Bich Thuy]{Ibilce-Unesp, Universidade Estadual Paulista ``J\'ulio de Mesquita Filho'',  Instituto de Bioci\^encias, Letras e Ci\^encias Exatas,  Rua Cristov\~ao Colombo, 2265, S\~ao Jos\'e do Rio Preto,  Brazil.}
\email{bichthuy@ibilce.unesp.br}

\maketitle \thispagestyle{empty}

\begin{abstract} 
We construct singular varieties ${\mathcal{V}}_G$ associated to a polynomial mapping $G : \C^{n} \to \C^{n - 1}$ where $n \geq 2$. 
Let  $G : \C^{3} \to \C^{2}$ be a local submersion, we prove that 
if the homology or the intersection homology with total perversity (with compact supports or closed supports) in dimension two  of any variety ${\mathcal{V}}_G$ is trivial then 
$G$ is  a fibration. In the case of a local submersion $G : \C^{n} \to \C^{n - 1}$ where $n \geq 4$, the result is still true with an additional  condition.

\end{abstract}

\section{INTRODUCTION}

Let $G: \C^n \to \C^{n-1}$ 
 be a non-constant polynomial mapping ($n \geq 2$). 
It is well known \cite{Thom} that $G$ is locally a trivial fibration outside the bifurcation set $B(G)$ in $\C^{n-1}$. In a natural way appears a fundamental question: how to determine the set $B(G)$. In \cite{VuiThang}, Ha Huy Vui and Nguyen Tat Thang gave, for a generic class of  $G: \C^n \to \C^{n-1}$ ($\, n \geq 2$), a necessary and sufficient condition for a point $z \in \C^{n-1}$ to be in the  bifurcation set $B(G)$ in term of the Euler characteristic of the fibers at  nearby points. The case n=2
was previously given in \cite{Vui-Trang} .

In this paper, we want to construct singular varieties ${\mathcal{V}}_G$ associated to a polynomial mapping $G : \C^{n} \to \C^{n - 1}$ ($n \geq 2$) such that the intersection homology of  ${\mathcal{V}}_G$ can 
cha\-rac\-te\-rize the bifurcation set of $G$. 
The motivation for this paper comes from the paper \cite{Valette}, where  Anna and Guillaume Valette constructed  real pseudomanifolds, denoted $V_F$, 
associated to a given polynomial mapping $F : \C^n \to \C^n$, such that 
the singular part of the variety $V_F$ is contained in $(S_F \times K_0(F)) \times \{ 0^p \}$ 
($\, p> 0$), where $K_0(F)$ is the set of critical values and $S_F$ is the set of non-proper points of $F$. 
 In the case of dimension $n = 2$, the homology or intersection homology of $V_F$ describes the geometry of the  singularities at infinity of 
 the mapping $F$. With Anna and Guillaume Valette, the first author generalized this result \cite{Thuy3} for the general case $F: \C^n \to \C^n$ ($\, n \geq 2$).
 The idea to construct varieties $V_F$ is the following: 
considering the polynomial mapping $F: \C^n \to \C^n$ as a real one $F: \R^{2n} \to \R^{2n}$, then if we take a finite covering $\{V_i\}$ by smooth submanifolds of $\R^{2n} \setminus SingF$, the mapping $F$ induces a diffeomorphism from $V_i$  into its image  $F(V_i)$. 
We use a technique  in order to separate these $\{F(V_i)\}$ by embedding them in a higher dimensional space, then $V_F$ is obtained by gluing $\{F(V_i)\}$ together along the set $S_F \cup K_0(F)$.

A natural question is that how can we apply this construction 
 to the case of polynomial mappings $G: \C^n \to \C^{n-1}$, or,  $G: \R^{2n} \to \R^{2n-2}$. 
The main difficulty of this case 
 is that if we take an open submanifold $V$ in $\R^{2n} \setminus SingF$, then locally we do not have a diffeomorphism from $V$ into its image $G(V)$. 
So we consider a generic $(2n -2)$- real dimensional submanifold in the source space $\R^{2n}$, denoted ${\mathcal{M}}_G$,  which is called {\it the Milnor set of $G$}. Then we can apply the construction of  singular varieties $V_F$ in \cite{Valette} for $F: = G\vert_{{\mathcal{M}}_G}$ the restriction of $G$ to the Milnor set   ${\mathcal{M}}_G$.

We obtain the following result: let  $G : \C^{3} \to \C^{2}$ be a local submersion, then 
if the homology or the intersection homology with total perversity (with compact supports or closed supports) in dimension two  of any  among of the constructed varieties ${\mathcal{V}}_G$ is trivial then 
$G$ is  a fibration (Theorem \ref{cidinhathuy1}). In the case of a local submersion $G : \C^{n} \to \C^{n - 1}$ where $n \geq 4$, the result is still true with an additional  condition (Theorem  \ref{cidinhathuy2}). Comparing with the paper \cite{VuiThang}, we obtain the Corollary \ref{coThuyCidinha3}.

\section{PRELIMINARIES AND BASIC DEFINITION}
In this section we set-up  our framework. All the varieties we consider in this article are semi-algebraic.

\subsection{Notations and conventions.}\label{section_notations}
Given a topological space $X$, singular simplices of $X$ will be semi-algebraic continuous mappings $\sigma:T_i \to X$, 
where $T_i$ is the standard $i$-simplex in $\R^{i+1}$. 
Given a subset $X$ of $\R^n$ we denote by $C_i(X)$ the group of
$i$-dimensional singular chains (linear combinations of singular simplices
with coefficients in $\R$); if
$c$ is an element of $C_i(X)$, we denote by $|c|$ its support.
By $Reg(X)$ and $Sing(X)$ we denote respectively the regular and singular locus of the set $X$. Given $X \subset \R^n$, $\overline{X}$ will stand for the topological closure of $X$. The smoothness to be considered as the differentiable smoothness.

Notice that, when we refer to the homology of a variety, the notation $H_*^{c}(X)$ refer to the homology with compact supports, the notation $H_*^ {cl}(X)$  refer to the homology with closed supports (see \cite{Jean Paul}).

\subsection{Intersection homology.}
We briefly recall the definition of intersection homology; for details,
we refer  to the fundamental work of M. Goresky and R. MacPherson
\cite{GM1} (see also \cite{Jean Paul}).

\begin{definition} 
{\rm Let $X$ be a $m$-dimensional semi-algebraic set.  A {\it semi-algebraic stratification of $X$} is the data of a finite semi-algebraic filtration 
$$X = X_{m} \supset X_{m-1} \supset \cdots \supset X_0 \supset X_{-1} = \emptyset,$$
 such that  for every $i$,  the set $S_i = X_i\setminus X_{i-1}$ is either empty or a manifold of dimension $i$. A connected component of $S_i$ is  called   {\it a stratum} of $X$.}
\end{definition}

We denote by $cL$  the open cone on the space $L$, the cone on the empty set being a point. Observe that if $L$ is a stratified set then $cL$ is stratified by  the cones over the strata of $L$ and a $0$-dimensional stratum (the vertex of the cone). 

\begin{definition}
{\rm A stratification of $X$ is said to be {\it locally topologically trivial} if for every $x \in X_i\setminus X_{i-1}$, $i \ge 0$, there is an open neighborhood $U_x$  of $x$ in $X$, a stratified set $L$ and a semi-algebraic homeomorphism 
 $$h:U_x \to (0;1)^i \times  cL,$$   such  that
 $h$ maps the strata of $U_x$ (induced stratification) onto the strata of  $  (0;1)^i \times cL$  (product stratification).}
\end{definition}

The definition of perversities as originally given by Goresky and MacPherson:
\begin{definition}
{\rm 
A {\it perversity} is an $(m+1)$-uple  of integers $\bar p = (p_0, p_1, p_2,
p_3,\dots , p_m)$ such that $p_0 = p_1 = p_2 = 0$ and $p_{k+1}\in\{p_k, p_k + 1\}$, for $k \geq 3$.

Traditionally we denote the zero perversity by
$\overline{0}=(0, 0, 0,\dots,0)$, the maximal perversity by
$\overline{t}=(0, 0, 0,1,\dots,m-2)$, and the middle perversities by
$\overline{m}=(0, 0, 0,0,1,1,\dots, [\frac{m-2}{2}])$ (lower middle) and
$\overline{n}=(0, 0, 0,1,1,2,2,\dots ,[\frac{m-1}{2}])$ (upper middle). We say that the
perversities $\overline{p}$ and $\overline{q}$ are {\it complementary} if $\overline{p}+\overline{q}=\overline{t}$.

Let $X$ be a semi-algebraic variety such that $X$ admits a  locally topologically trivial stratification. We say that a semi-algebraic subset $Y\subset X$ is  $(\bar
p, i)$-{\it allowable} if  
$$\dim (Y \cap X_{m-k}) \leq i - k + p_k \text{ for
all } k.$$

Define $IC_i ^{\overline{p}}(X)$ to be the $\R$-vector subspace of $C_i(X)$
consisting in those chains $\xi$ such that $|\xi|$ is
$(\overline{p}, i)$-allowable and $|\partial \xi|$ is
$(\overline{p}, i - 1)$-allowable.}
\end{definition}

\begin{definition} 
{\rm The {\it $i^{th}$ intersection homology group with perversity $\overline{p}$}, denoted by
$IH_i ^{\overline{p}}(X)$, is the $i^{th}$ homology group of the
chain complex $IC^{\overline{p}}_*(X).$
}
\end{definition}

Notice that, the notation $IH_*^{\overline{p}, c}(X)$ refer to the intersection homology with compact supports, the notation $IH_*^ {\overline{p},cl}(X)$  refer to the intersection homology with closed supports.

 Goresky and MacPherson proved that the intersection homology  is independent of the
choice of the stratification \cite{GM1, GM2}.

The Poincar\'e duality holds for the intersection homology of a (singular) variety:

 \begin{theorem}[Goresky,
MacPherson \cite{GM1}] {\it For any orientable compact stratified
semi-algebraic $m$-dimensional variety  $X$,  generalized Poincar\'e duality holds:
$$IH_k ^{\overline{p}}(X)  \simeq IH_{m-k} ^{\overline{q}} (X),$$
where $\overline{p}$ and $\overline{q}$ are complementary perversities.
}
\end{theorem}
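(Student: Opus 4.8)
The plan is to exhibit the duality as induced by a nondegenerate intersection pairing on chains, in the geometric spirit of Goresky and MacPherson \cite{GM1}. Fix complementary perversities, so that $\bar p+\bar q=\bar t$; since $X$ is compact, its intersection homology with compact supports and with closed supports coincide, so the statement is unambiguous. First I would construct a bilinear pairing
$$IH_k^{\bar p}(X)\times IH_{m-k}^{\bar q}(X)\longrightarrow\R.$$
Given cycles $\xi\in IC_k^{\bar p}(X)$ and $\eta\in IC_{m-k}^{\bar q}(X)$, one moves their supports into general position with respect to the stratification --- using a semi-algebraic triangulation of $X$ compatible with the strata together with a semi-algebraic transversality argument --- so that $|\xi|$ and $|\eta|$ meet transversally inside the top stratum $X\setminus X_{m-1}$ and $\dim(|\xi|\cap|\eta|\cap(X_{m-j}\setminus X_{m-j-1}))\le p_j+q_j-j$ for every $j$. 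Because $\bar p+\bar q=\bar t$, the right-hand side equals $-2$ for $j\ge 3$ and is negative for $j=1,2$, so $|\xi|\cap|\eta|$ lies entirely in the oriented manifold $X\setminus X_{m-1}$; being compact and $0$-dimensional there, it is a finite point set, and counting its points with the signs given by the orientation defines the value of the pairing. A standard boundary-moving argument --- putting a chain bounding $\partial\xi$ or $\partial\eta$ into general position --- shows the result depends only on the homology classes, so the pairing descends to intersection homology.

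The second and principal step is to prove this pairing nondegenerate. Since $X$ is compact semi-algebraic, all the groups $IH_\ast^{\bar p}(X)$ are finite-dimensional $\R$-vector spaces, so nondegeneracy yields $IH_k^{\bar p}(X)\cong IH_{m-k}^{\bar q}(X)^{\ast}\cong IH_{m-k}^{\bar q}(X)$, which is the assertion. Nondegeneracy is proved by a local-to-global induction on the depth of the stratification. The base case is the open stratum $X\setminus X_{m-1}$, where the pairing is the usual intersection pairing on an oriented $m$-manifold and the claim is classical Poincar\'e duality. For the inductive step one covers $X$ by distinguished neighborhoods $U_x\cong(0;1)^i\times cL$ supplied by the local topological triviality of the stratification, where $L$ is a compact stratified set of dimension $m-i-1$ and of strictly smaller depth. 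On $U_x$ the pairing reduces, through the cone formula for compactly supported intersection homology of $cL$ (which truncates $IH_\ast^{\bar p}(L)$ above degree $(m-i)-2-p_{m-i}$) and for closed-support intersection homology of $cL$ (which reintroduces $IH_\ast^{\bar q}(L)$ over the complementary range, by the identity $p_{m-i}+q_{m-i}=(m-i)-2$), to Poincar\'e duality on the compact stratified link $L$, which is nondegenerate by the induction hypothesis. Finally one patches: the intersection pairing is compatible with the Mayer--Vietoris sequences of $IH^{\bar p}$ and $IH^{\bar q}$, so the five lemma propagates nondegeneracy from the members of an open cover to their unions, and hence to all of $X$.

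The hard part will be the general-position step, carried out in the semi-algebraic rather than the PL category. One must show that every $(\bar p,k)$-allowable cycle can be deformed --- by a semi-algebraic isotopy supported away from an arbitrarily small neighborhood of $X_{m-1}$ --- into a cycle dimensionally transverse to every stratum that is still $(\bar p,k)$-allowable, and likewise for the chains realizing homologies; one must then verify that the signed intersection count is unchanged under all such deformations. This requires a semi-algebraic analogue of the classical PL transversality and general-position theorems, obtainable from semi-algebraic triangulation, the curve selection lemma and the local conical structure of semi-algebraic sets, together with careful tracking of the allowability inequalities. Everything else --- finite-dimensionality over $\R$, the cone formula, the Mayer--Vietoris/five-lemma patching, and the concluding appeal to the stratification-independence of $IH_\ast$ recorded above --- is then essentially formal. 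An alternative, transversality-free route is sheaf-theoretic: characterize the intersection chain complex by Deligne's axioms and check that Verdier duality interchanges the complexes for $\bar p$ and $\bar q$ up to the appropriate shift and orientation twist; but as our intersection homology has been defined through semi-algebraic chains, the geometric argument above is the more direct one.
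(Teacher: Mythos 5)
This is a cited result, not one the paper proves: the text simply attributes generalized Poincar\'e duality to Goresky and MacPherson and refers to \cite{GM1}, offering no argument of its own. So there is no ``paper's proof'' for me to compare your attempt against, and the appropriate treatment here --- which the authors adopt --- is to quote the theorem and move on.

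As for the reconstruction itself: what you sketch is the geometric, chain-level proof, which is really the argument of \cite{GM2} (the 1980 \emph{Topology} paper, set in the PL category), not of \cite{GM1} (the 1983 \emph{Inventiones} paper, which proceeds sheaf-theoretically via the Deligne construction and Verdier duality, the ``alternative route'' you mention at the end). The outline is broadly right --- intersection pairing via general position, nondegeneracy by induction on stratification depth using the cone formula and a Mayer--Vietoris/five-lemma patching --- and you correctly observe that over $\R$ with $X$ compact the compact and closed-support theories coincide and everything is finite-dimensional, so nondegeneracy of the pairing gives the isomorphism. But you yourself flag the genuine gap: the general-position step for allowable chains in the semi-algebraic category is not a formal consequence of the PL theory, and your appeal to ``semi-algebraic triangulation, curve selection, and local conical structure'' is a plan rather than a proof. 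The closed-support cone formula (over the open cone $cL$, which is non-compact) also needs to be stated more carefully than ``reintroduces $IH_\ast^{\bar q}(L)$ over the complementary range.'' None of this is fatal --- it is exactly where the real work of \cite{GM2} lies --- but it means what you have is an honest sketch of a known proof, not a self-contained argument. In the context of this paper, citing the theorem as the authors do is the correct move.
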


\noindent For the non-compact case, we have:
$$IH_k ^{\overline{p}, c}(X)  \simeq IH_{m-k} ^{\overline{q}, cl} (X).$$
A relative version is also true in the case where $X$ has boundary.

\subsection{The bifurcation set, the set of asymptotic critical values and the asymptotic set} 
Let $G : \C^n \to \C^m$ where $n \geq m$ be a polynomial mapping. 

\medskip

i) The bifurcation set of $G$, denoted by $B(G)$ is the smallest set in $\C^m$ such that $G$ is not $C^{\infty}$ - fibration on this set (see, for example, \cite{Thom}). 

\medskip 

ii) The set of asymptotic critical values, denoted by $K_{\infty}(G)$, is the set 
$$K_{\infty}(G) = \{ \alpha \in \C^m : \exists \{ z_k\} \subset \C^n, \text{ such that } \vert z_k \vert \to \infty, 
G(z_k) \to \alpha  \text{ and } \vert z_k \vert  \vert dG(z_k) \vert \to 0 \}.$$
The set $K_{\infty}(G)$ is an approximation of the set $B(G)$. More precisely, we have $B(G) \subset K_{\infty}(G)$ (see, for example, \cite{Ku} or \cite{Luis}).

\medskip 

iii) When $n = m$, we denote by $S_G$ the set of points at which the mapping $G$ is not proper, {\it i.e.} 
$$S_G= \{ \alpha \in \C^m : \exists \{ z_k\} \subset \C^n, \vert z_k \vert \to \infty \text{ such that } G(z_k) \to \alpha\},$$
and call it the {\it asymptotic variety}. In the case of polynomial mappings $F: \C^n \to \C^n$, the following holds: $B(G) = S_G$ (\cite{Jelonek2}).

\section{THE VARIETY ${\mathcal{M}}_G$} 

We consider polynomial mappings $G : \C^n \to \C^{n-1}$ as real ones $G : \R^{2n} \to \R^{2n-2}$. By $Sing(G)$ we mean the singular locus of $G$, that   is the set of points for which the (complex) rank of the Jacobian matrix is  less than $n - 1$. We denote by $K_0(G)$ the set of critical values. From here, we assume always $K_0(G) = \emptyset$.

\begin{definition} \label{defVG}
{\rm Let $G : \C^n \to \C^{n-1}$ be a polynomial mapping. Consider $G$ as a real polynomial mapping  $G : \R^{2n} \to \R^{2n-2}$. Let $\rho : \C^n \to \R$ be a real function such that $\rho(z) \geq 0$ for any $z \in \C^n$.
Let 
$$\varphi = \frac{1}{1 + \rho}.$$
Consider $(G, \varphi)$  as a mapping from $\R^{2n}$ to $\R^{2n-1}$. 
 Let us define
$${\mathcal{M}}_G : = Sing(G, \varphi) = \{x \in \R^{2n} \text{ such that } \Rank D_{\R}(G, \varphi)(x) \leq 2n - 2\},$$
where $D_{\R}(G, \varphi)(x)$ is the Jacobian matrix of $(G, \varphi): \R^{2n} \to \R^{2n - 1}$ at $x$.
}
\end{definition}

\begin{remark}
{\rm Since $K_0(G) = \emptyset$, then $\Rank D_{\R}(G) = 2n -2$, so we have 
$$Sing(G, \varphi) = \{x \in \R^{2n} \text{ such that } \Rank D_{\R}(G, \varphi) = 2n - 2\}.$$
}
\end{remark}

Note that, from here, if we want to refer to the source space as a complex space, we will write  $(G, \varphi) : \C^{n} \to \R^{2n-1}$, if we want to refer to the source space as a real space, we will write $(G, \varphi) : \R^{2n} \to \R^{2n-1}.$ Moreover, in general, we denote by $z$  a complex element in $\C^n$ and by $x$ a real element in $\R^{2n}$.

\begin{lemma} \label{lemmaSingGpsi}
{\rm For any $\rho, \varphi$ and $(G, \varphi)$ as in the Definition \ref{defVG} and for any $x = (x_1, \ldots, x_{2n}) \in \R^{2n}$, we have   
$$\Rank D_{\R}(G, \varphi)(x) = \Rank D_{\R}(G, \rho)(x),$$ 
so we have 
$${\mathcal{M}}_G = Sing(G, \varphi) = Sing(G, \rho).$$}
\end{lemma}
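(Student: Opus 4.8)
The plan is to exploit the fact that $\varphi$ is a (smooth, monotone) function of $\rho$ alone, so that the gradient of $\varphi$ and the gradient of $\rho$ are everywhere proportional. Concretely, since $\varphi = 1/(1+\rho)$ we have, for every $x \in \R^{2n}$,
$$
D_{\R}\varphi(x) \;=\; -\frac{1}{(1+\rho(x))^2}\, D_{\R}\rho(x),
$$
and the scalar $-1/(1+\rho(x))^2$ is never zero (here $\rho(x) \ge 0$, so $1+\rho(x) \ge 1 > 0$). First I would record this identity and observe that the Jacobian matrix $D_{\R}(G,\varphi)(x)$ is obtained from $D_{\R}(G,\rho)(x)$ by multiplying its last row (the $\rho$-row, resp. $\varphi$-row) by the nonzero constant $-1/(1+\rho(x))^2$, while leaving the first $2n-2$ rows (the rows coming from $G$) unchanged.

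Next I would invoke the elementary linear-algebra fact that multiplying one row of a matrix by a nonzero scalar is an invertible row operation and hence preserves the rank. Applying this at the fixed point $x$ gives
$$
\Rank D_{\R}(G,\varphi)(x) \;=\; \Rank D_{\R}(G,\rho)(x)
$$
for every $x \in \R^{2n}$. Taking the locus where this common rank is $\le 2n-2$ then yields $Sing(G,\varphi) = Sing(G,\rho)$, and the definition of ${\mathcal{M}}_G$ closes the argument.

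There is essentially no obstacle here: the only point that deserves a line of care is that the proportionality factor is genuinely nonzero for all $x$, which is guaranteed by the hypothesis $\rho(x) \ge 0$ (so that $1+\rho(x)$ never vanishes and $\varphi$ is well-defined and smooth in the first place). One could phrase the whole thing more invariantly by noting that $(G,\varphi) = (\mathrm{id}_{\C^{n-1}} \times \psi) \circ (G,\rho)$ where $\psi(t) = 1/(1+t)$ is a diffeomorphism of $[0,\infty)$ onto $(0,1]$, so the two maps have the same critical set by the chain rule; either formulation is routine.
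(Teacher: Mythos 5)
Your argument is correct and is essentially identical to the paper's proof: both write the last row of $D_{\R}(G,\varphi)(x)$ as the last row of $D_{\R}(G,\rho)(x)$ scaled by $-1/(1+\rho(x))^2$ and conclude equality of ranks. Your version is slightly more careful in explicitly noting that this scalar never vanishes because $\rho \geq 0$, which the paper leaves implicit.
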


\begin{proof}
For any $x = (x_1, \ldots, x_{2n}) \in \R^{2n}$, we have 
$$ D_{\R}(G, \rho) (x)={ \begin{pmatrix}  & D_{\R}(G) & \\ \rho_{x_1} & \ldots & \rho_{x_{2n}}  \end{pmatrix}},$$
$$ D_{\R}(G, \varphi) (x)={ \begin{pmatrix}  & D_{\R}(G) & \\ \frac{-\rho_{x_1}}  { {(1 + \rho)}^2 } & \ldots & \frac { -\rho_{x_{2n} } } { {(1 + \rho)}^2}   \end{pmatrix} },$$
where $\rho_{x_i}$ is the derivative of $\rho$ with respect to $x_i$, for $i = 1, \ldots, 2n.$
We have $\Rank D_{\R}(G, \varphi) (x)= \Rank D_{\R}(G, \rho)(x)$ for any $x \in \R^{2n}$ and ${\mathcal{M}}_G = Sing(G, \varphi) = Sing(G, \rho)$.

\end{proof}

\begin{remark} \label{remark rho}
{\rm From here, we consider the function $\rho$ of the following form
$$\rho = a_1 \vert z_1 \vert ^2 + \cdots + a_n \vert z_n \vert^2,$$
 where $\Sigma_{i = 1, \ldots, n} a^2_i \neq 0 $, $\, a_i \geq 0$, and $a_i \in \R$ for $i = 1, \ldots, n.$
}
\end{remark}

\begin{proposition} \label{remarkmixedfunction}
{\rm Let 
  $G = (G_1, \ldots, G_{n-1}) : \C^n \to \C^{n -1} \, ( n \geq 2)$ be a polynomial mapping such that $K_0(G) = \emptyset$ and $\rho: \C^n \to \R$ be such that 
$\rho = a_1 \vert z_1 \vert ^2 + \cdots + a_n \vert z_n \vert^2,$
 where $\Sigma_{i = 1, \ldots, n} a^2_i \neq 0$, $a_i \geq 0$ and $a_i \in \R$, for $i= 1, \ldots, n$. Denote by  ${\bf v}_i$ the determinant of the cofactor of  $\frac{\partial}{\partial z_i}$ of the matrix  
$${\bf V}(z) =
\begin{pmatrix}
\frac{\partial}{\partial z_1} & \cdots &  \frac{\partial}{\partial z_n} \\
\frac{\partial G_1}{\partial z_1} & \cdots &  \frac{\partial G_1}{\partial z_n} \\
& \cdots \\
\frac{\partial G_{n-1}}{\partial z_1} & \cdots &  \frac{\partial G_{n-1}}{\partial z_n} \\
  \end{pmatrix},$$
 for $i = 1, \ldots, n$. Then  we have 
$${\mathcal{M}}_G = h^{-1}(0),$$
 where $$h : \C^{n} \to \C, \quad h(z) = 2\Sigma a_i{\bf v}_i(z) \overline{z_i}.$$
}
\end{proposition}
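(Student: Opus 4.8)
The plan is to reduce the real rank condition defining ${\mathcal{M}}_G$ to a statement about the complex Jacobian of $G$, and then to identify that condition using the vector $({\bf v}_1, \ldots, {\bf v}_n)$, which spans the kernel of the complex Jacobian. First I would invoke Lemma \ref{lemmaSingGpsi} to replace $(G, \varphi)$ by $(G, \rho)$, so that ${\mathcal{M}}_G = Sing(G, \rho)$. Since $K_0(G) = \emptyset$, the real Jacobian $D_{\R}(G)(x)$ has rank $2n-2$ at every point, and hence $x \in {\mathcal{M}}_G$ precisely when the real $1$-form $d\rho(x)$ lies in the $\R$-linear span of the rows of $D_{\R}(G)(x)$, that is, of the forms $d(\Re G_j)$ and $d({\rm Im}\, G_j)$, $j = 1, \ldots, n-1$.

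Next I would pass to the complex Jacobian. Writing $dG_j = \sum_i \frac{\partial G_j}{\partial z_i}\, dz_i$, holomorphy of the $G_j$ gives $d(\Re G_j) = \Re(dG_j)$ and $d({\rm Im}\, G_j) = \Re(-i\, dG_j)$, so the $\R$-span above is exactly $\{\, \Re(\sum_{j} \lambda_j dG_j) : \lambda \in \C^{n-1}\,\}$; and since $\rho$ is real valued, $d\rho = 2\,\Re(\sum_i \frac{\partial \rho}{\partial z_i}\, dz_i)$. A real $1$-form $\Re(\sum_i c_i\, dz_i)$ vanishes if and only if all $c_i$ vanish, so comparing coefficients of $dz_i$ shows that $x \in {\mathcal{M}}_G$ if and only if there is $\lambda \in \C^{n-1}$ with $2\frac{\partial \rho}{\partial z_i}(z) = \sum_j \lambda_j \frac{\partial G_j}{\partial z_i}(z)$ for every $i$; equivalently, the vector $w(z) = (2\frac{\partial \rho}{\partial z_1}(z), \ldots, 2\frac{\partial \rho}{\partial z_n}(z))$ belongs to the $\C$-row space of the matrix $DG(z) = (\frac{\partial G_j}{\partial z_i}(z))$.

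Then I would exploit the kernel generator. Because $K_0(G) = \emptyset$, $DG(z)$ has rank $n-1$ and its kernel is a line; expanding $\det {\bf V}(z)$ along the first row exhibits the vector field $\sum_i {\bf v}_i(z)\frac{\partial}{\partial z_i}$, and the scalar $\sum_i \frac{\partial G_j}{\partial z_i}(z)\,{\bf v}_i(z)$ is the determinant of the matrix obtained from ${\bf V}(z)$ by replacing its first row with the $j$-th row of $DG(z)$, hence equals $0$ (repeated row); since ${\bf v}(z) = ({\bf v}_1(z), \ldots, {\bf v}_n(z)) \ne 0$, it spans $\ker DG(z)$. Over $\C$, with respect to the bilinear pairing $\langle u, v\rangle = \sum_i u_i v_i$, the row space of a matrix is the annihilator of its kernel, so $w(z)$ lies in the row space of $DG(z)$ if and only if $\sum_i 2\frac{\partial \rho}{\partial z_i}(z)\,{\bf v}_i(z) = 0$. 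Finally, for $\rho = a_1|z_1|^2 + \cdots + a_n|z_n|^2$ one has $\frac{\partial \rho}{\partial z_i} = a_i \overline{z_i}$, so this condition reads $2\sum_i a_i {\bf v}_i(z)\overline{z_i} = h(z) = 0$, which gives ${\mathcal{M}}_G = h^{-1}(0)$.

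The main obstacle I anticipate is the passage in the second paragraph from the real rank condition to the complex-analytic one: one has to check carefully that the rank drop of $D_{\R}(G,\rho)$ is governed exactly by the $(1,0)$-parts of the differentials, which is what produces the factor $2$ and, more importantly, explains why no conjugation lands on the ${\bf v}_i$. The conjugates appearing in $h$ come solely from $\partial\rho/\partial z_i = a_i\overline{z_i}$, whereas the pairing between $w(z)$ and $\ker DG(z)$ is the holomorphic bilinear one rather than the Hermitian one. Everything else is elementary linear algebra together with the standard cofactor identity $DG(z)\cdot {\bf v}(z) = 0$.
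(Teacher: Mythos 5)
Your proof is correct and follows essentially the same route as the paper's: both reduce the rank condition on $D_{\R}(G,\rho)$ to the vanishing of the holomorphic directional derivative of $\rho$ along the cofactor vector field ${\bf V}(z)=\sum_i {\bf v}_i(z)\partial_{z_i}$, which spans $\ker D_{\C}G(z)$ since $K_0(G)=\emptyset$. The only difference is in presentation: the paper simply asserts that ${\bf V}$ is tangent to the level curve $G=c$ and pairs it with a gradient of $\rho$, whereas you spell out the passage from the real rank condition to the complex one (via $d(\Re G_j)=\Re(dG_j)$, $d(\Im G_j)=\Re(-i\,dG_j)$, $d\rho = 2\Re(\sum_i \rho_{z_i}\,dz_i)$) and carefully identify the pairing as the holomorphic bilinear one, which the paper leaves implicit.
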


\begin{proof}

Let $G = (G_1, \ldots, G_{n-1}) : \C^n \to \C^{n -1} \, ( n \geq 2)$ and $\rho: \C^n \to \R$ such that 
$\rho = a_1 \vert z_1 \vert ^2 + \cdots + a_n \vert z_n \vert^2,$
 where $\Sigma_{i = 1, \ldots, n} a^2_i \neq 0$, $a_i \geq 0$ and $a_i \in \R$. Let us consider the vector field 

$${\bf V}(z) =
\begin{pmatrix}
\frac{\partial}{\partial z_1} & \cdots &  \frac{\partial}{\partial z_n} \\
\frac{\partial G_1}{\partial z_1} & \cdots &  \frac{\partial G_1}{\partial z_n} \\
& \cdots \\
\frac{\partial G_{n-1}}{\partial z_1} & \cdots &  \frac{\partial G_{n-1}}{\partial z_n} \\
  \end{pmatrix}.$$
We have 
$${\bf V}(z) = {\bf v}_1 \frac{\partial}{\partial z_1} + \cdots + {\bf v}_n \frac{\partial}{\partial z_n},$$ 
 where ${\bf v}_i$ is the determinant of the cofactor of  $\frac{\partial}{\partial z_i}$, for $i = 1, \ldots, n$.
The vector field ${\bf V}(z)$ 
is tangent to the curve $G = c$. Let $R(z) = a_1  z_1^2  + \cdots + a_n z_n^2$, then 
 we have ${\mathcal{M}}_G = h^{-1}(0)$, where 
$$h : \C^n \to \C, \quad h(z) = <{\bf V}(z), \Grad \, R(z)>.$$
More precisely,  we have $h(z) = 2\Sigma a_i{\bf v}_i(z) \overline{z_i}.$
\end{proof}

\begin{proposition} \label{prodimSingGpsi}
{\rm For an open and dense set of polynomial mappings $G: \C^n \to \C^{n-1}$ such that $K_0(G) = \emptyset$, the variety ${\mathcal{M}}_G$  is a smooth manifold of  dimension $2n-2$. 
}
\end{proposition}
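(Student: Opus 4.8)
The plan is to show that the map $h:\C^{n}\to\C$ of Proposition~\ref{remarkmixedfunction}, viewed as a real map $h:\R^{2n}\to\R^{2}$, has $0$ as a regular value for generic $G$, so that $\mathcal{M}_G=h^{-1}(0)$ is automatically a smooth real submanifold of codimension $2$, hence of dimension $2n-2$. The key observation is that $h(z)=2\sum_{i}a_{i}{\bf v}_{i}(z)\overline{z_{i}}$ is a \emph{mixed polynomial} (polynomial in $z$ and $\bar z$), and that the coefficients of $G$ enter the functions ${\bf v}_{i}$ polynomially. So I would set up a parametrized family: let $\mathcal{G}$ be the (finite-dimensional, for bounded degree, or ind-) space of polynomial mappings $G$ with $K_{0}(G)=\emptyset$, and consider the incidence variety $\mathcal{Z}=\{(z,G):h_{G}(z)=0\}$. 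The goal is to apply a Thom–Sard / transversality argument: show the projection $\mathcal{Z}\to\mathcal{G}$ is, off a set of measure zero (or off a proper algebraic subset, giving open and dense), a submersion onto, equivalently that for generic $G$ the restriction of $h_G$ to its zero set is a submersion.

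The main steps, in order, would be: (1) Record that $K_{0}(G)=\emptyset$ is an open and dense condition, so we may work inside that open set and it suffices to find a further open dense subset. (2) Write $h_G$ explicitly and compute the real Jacobian $D_{\R}h_G(z)$; the crucial point is that differentiating $2\sum a_i {\bf v}_i(z)\overline{z_i}$ in the $\bar z$ directions produces the terms $2 a_i {\bf v}_i(z)$ plus terms with ${\bf v}_i$ differentiated, so at a point where $h_G(z)=0$ one controls the rank by the vector $({\bf v}_1(z),\dots,{\bf v}_n(z))$ together with the point $z$. (3) Show that for generic $G$, at every common zero $z\neq 0$ of all the ${\bf v}_i$ the map is still a submersion — or better, that one can perturb $G$ by adding suitable terms (e.g. linear combinations affecting a single ${\bf v}_i$, or translating by generic constants, which is allowed since it does not affect $K_0(G)=\emptyset$ nor the regularity class) so that $0$ becomes a regular value of $h_G$. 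This is a standard "add enough parameters to move the bad locus" argument; the cleanest route is to verify that the universal map $\mathcal{Z}\to\mathcal{G}$ is a submersion at every relevant point, then invoke the parametrized Sard theorem to get a residual — in the semi-algebraic/polynomial setting, an open and dense — set of good $G$. (4) Handle the origin $z=0$ separately: there $h_G(0)=0$ always, so one checks directly (using the lower-order coefficients of $G$) that $Dh_G(0)$ has full real rank $2$ for generic $G$, or notes that $0$ can be excluded by a generic translation in the target.

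The hard part will be step (3)–(4): controlling the locus where the cofactors ${\bf v}_i$ all vanish simultaneously, i.e.\ where $\mathrm{rank}\,D_{\C}G<n-1$ — but this is exactly $Sing(G)$, which is empty-ish in the sense that $K_0(G)=\emptyset$ only kills critical \emph{values}, not critical points; so some genuine argument is needed to see that on $Sing(G)$ (where the ${\bf v}_i$ vanish) the extra rows of $D_{\R}(G,\rho)$ or equivalently the behavior of $h_G$ is still generically transverse. I expect this to be dispatched by choosing $\rho$ generically (the $a_i$ are free parameters with $\sum a_i^2\neq 0$) in tandem with $G$, so that the pair $(G,\rho)$ is generic; a dimension count on the incidence variety $\{(z,G,a): {\bf v}_i(z)=0\ \forall i,\ h_{G,a}(z)=0,\ D h_{G,a}(z)\text{ degenerate}\}$ should show its image in the parameter space is a proper subvariety, yielding the claimed open dense set. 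The smoothness and the dimension statement then follow at once from the regular value theorem.
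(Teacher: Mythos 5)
Your approach is genuinely different from the paper's, and in principle it would work, but there is one important misconception in your proposal that is worth correcting.

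You plan to show directly that $0$ is a regular value of the mixed polynomial $h(z)=2\sum_i a_i\,{\bf v}_i(z)\overline{z_i}$ for generic $G$ via a parametrized transversality argument on the incidence variety, obtaining smoothness and dimension in one stroke. The paper instead proceeds in two steps: for the dimension, it observes that $z_0\in\mathcal{M}_G$ precisely when $z_0$ is a critical point of $\rho\vert_{G=c}$ on the $1$-dimensional complex fiber $G=c$, and shows that for an open and dense set of $G$ these are Morse, hence isolated in the fiber; letting $c$ range over $\C^{n-1}$ then gives $\dim_{\R}\mathcal{M}_G=2n-2$. For smoothness it separately invokes $\mathcal{M}_G=h^{-1}(0)$ and applies the implicit function theorem near each point, using that the cofactor vector ${\bf V}(z)$ never vanishes. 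Your Sard-type argument is more global and avoids the somewhat heuristic ``when $c$ varies'' step; the paper's Morse-theoretic argument, on the other hand, makes the geometric meaning of $\mathcal{M}_G$ as a multi-section of $G$ more transparent.

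The genuine issue with your write-up is the passage where you describe ``the hard part'': you say that $K_0(G)=\emptyset$ ``only kills critical values, not critical points,'' and so you worry about transversality along $Sing(G)$, where all the ${\bf v}_i$ vanish. This is a misconception: for a polynomial mapping, $K_0(G)=G(Sing(G))$, so $K_0(G)=\emptyset$ forces $Sing(G)=\emptyset$ and hence ${\bf V}(z)=({\bf v}_1(z),\dots,{\bf v}_n(z))\neq 0$ for \emph{every} $z$ — this is exactly the fact the paper uses (``Since $K_0(G)=\emptyset$ then ${\bf V}(z)\neq 0$''). Once you take this on board, the stratum you were worried about is empty, step (3) collapses, and your argument is a routine parametrized transversality statement: away from $z=0$ one perturbs a coefficient of $G$ (or of $\rho$, via the parameters $a_i$) to move ${\bf v}_j(z)\overline{z_j}$ and verify the evaluation map submerses onto $\C$; at $z=0$ one computes directly that $\partial h/\partial\overline{z_j}(0)=2a_j{\bf v}_j(0)$, which is nonzero for some $j$ since ${\bf V}(0)\neq 0$ and the $a_i$'s are not all zero, so the real Jacobian already has rank $2$ there without any genericity. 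With that correction your plan goes through.
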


\begin{proof} 
The question is of local nature. In a neighbourhood of a point $z_0$ in $\C^n$, we can choose coordinates such that the level curve $G = c$, where $c = G(z_0) \in \C^{n - 1}$ is parametrized 
$$\gamma : (\C, 0) \to (\C^n, z_0)$$
$$ \quad \quad \quad \quad \quad \quad \quad \quad s \mapsto (\gamma_1(s), \ldots, \gamma_n(s)).$$ 
Since $\rho = a_1 \vert z_1 \vert ^2 + \cdots + a_n \vert z_n \vert ^2$, then $\rho \circ \gamma : (\C, 0) \to \R$ and 
$$\rho \circ \gamma (s) = a_1 \vert \gamma_1(s) \vert ^2 + \cdots + a_n \vert \gamma_n(s) \vert ^2.$$
If $z_0$ is a singular point of $\rho \vert_{G= c}$, then 
$$\rho \circ \gamma(0) = \rho(\gamma(0)) = \rho(z_0),$$
$$(\rho \circ \gamma)'(0) = 0.$$
For an open and dense set of $G$, we have 
$$(\rho \circ \gamma)''(0) \neq 0.$$
Hence, $z_0$ is a Morse singularity of $\rho \vert_{G= c}$. 
In particular, it is an isolated point of the level curve $G = c$. When $c$ varies in $\C^{n-1}$, it follows that the set ${\mathcal{M}}_G$  has dimension $2n-2$. 

We prove now that ${\mathcal{M}}_G$ is smooth. By Proposition \ref{remarkmixedfunction}, the variety ${\mathcal{M}}_G$ is the set of solutions of $h = 0$, where 
$$h(z) = 2 \Sigma a_i {\bf v}_i(z) \overline{z_i},$$
and   ${\bf v}_i$ is the determinant of the cofactor of  $\frac{\partial}{\partial z_i}$ of 
${\bf V}(z)$, 
for $i = 1, \ldots, n$. 
Since $K_0(G) = \emptyset$ then ${\bf V}(z) = ({\bf v}_1(z), \ldots, {\bf v}_n(z))\neq 0$. 
We can assume that ${\bf V}(z_0) \neq 0$ for a fixed point $z_0$.  
For a generic polynomial mapping, we can solve the equation $h = 0$ in a neighbourhood of $z_0$. This shows that $h = 0$ is smooth in a neighbourhood of $z_0$. Then $M_G$ is smooth.

\end{proof}

\begin{remark}
{\rm From here, we consider always generic polynomial mappings $G: \C^n \to \C^{n - 1}$ as in the Propostion  \ref{prodimSingGpsi}.
}
\end{remark}

\section{The variety ${\mathcal{V}}_G$}
\subsection{The construction of the variety ${\mathcal{V}}_G$} \label{VG}
Let $G: \C^n \to \C^{n-1}$ and $\rho, \varphi: \C^n \to \R$ such that 
$$\rho = a_1 \vert z_1 \vert ^2 + \cdots + a_n \vert z_n \vert^2, \quad \varphi = \frac{1}{1 + \rho},$$
where $\Sigma_{i = 1, \ldots, n} a^2_i \neq 0$, $\, a_i \geq 0$ and $a_i \in \R$.
 Let us consider:

\medskip

a) $F : = G_{\vert {\mathcal{M}}_G }$ the restriction of $G$ on ${\mathcal{M}}_G$,

\medskip 

b) ${\mathcal{N}}_G = {\mathcal{M}}_G \setminus F^{-1}(K_0(F))$.

\medskip

\noindent  Since the dimension of ${\mathcal{M}}_G$  is $2n - 2$ (Proposition \ref{prodimSingGpsi}), then locally, in a neighbourhood of any point $x_0$ in ${\mathcal{M}}_G$, 
   we get a mapping $F : \R^{2n-2}  \to \R^{2n - 2}$. Now, we can apply the construction of singular varieties $V_F$ in \cite{Valette} for $F : = G_{\vert {\mathcal{M}}_G }$: 
there exists a co\-ver\-ing $\{ U_1, \ldots , U_p \}$ of ${\mathcal{N}}_G$  by open semi-algebraic subsets (in $\R^{2n}$) such that on every element of this co\-ver\-ing, the mapping $F$ induces a diffeomorphism   
onto its image (see Lemma 2.1 of \cite{Valette}, see also \cite{Thuy1}). We can find semi-algebraic closed  subsets $V_i \subset U_i$ (in ${\mathcal{N}}_G$) which cover ${\mathcal{N}}_G$ as well. 
Thanks to Mostowski's Separation Lemma (see Separation Lemma in \cite{Mos}, page 246), for each $\, i =1, \ldots , p$, 
there exists a Nash function $\psi_i : {\mathcal{N}}_G \to \R$,  
such that  $\psi_i$ is positive on $V_i$ and negative on ${\mathcal{N}}_G \setminus U_i$. 

\begin{lemma}
{\rm We can choose the Nash functions $\psi_i$ such that $\psi_i (x_k)$ tends to  zero when $\{x_k\} \subset {\mathcal{N}}_G$ tends to infinity.
}
\end{lemma}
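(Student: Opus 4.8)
The plan is to start from the Nash functions $\psi_i$ produced by Mostowski's Separation Lemma and correct them at infinity by multiplying by a suitable decaying Nash factor, keeping the sign conditions intact. First I would note that the only properties we need to preserve are: $\psi_i > 0$ on $V_i$ and $\psi_i < 0$ on ${\mathcal{N}}_G \setminus U_i$; both of these are open conditions stable under multiplication by a strictly positive function. So it suffices to find a Nash function $\theta : {\mathcal{N}}_G \to \R$ with $\theta > 0$ everywhere and $\theta(x_k) \to 0$ whenever $|x_k| \to \infty$, and replace each $\psi_i$ by $\theta \cdot \psi_i$. (If one also wants to keep the $\psi_i$ bounded, multiply instead by $\theta/(1+\psi_i^2)$, still positive and Nash.)

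The natural candidate is $\theta(x) = \dfrac{1}{1 + \|x\|^2}$, where $\|x\|^2 = x_1^2 + \cdots + x_{2n}^2$ is the square of the Euclidean norm on $\R^{2n}$; this is a rational function with nonvanishing denominator, hence Nash on $\R^{2n}$ and a fortiori on the semialgebraic set ${\mathcal{N}}_G$, it is everywhere strictly positive, and $\theta(x) \to 0$ as $|x| \to \infty$, indeed $0 < \theta(x_k)\le 1/|x_k|^2 \to 0$ for any sequence $\{x_k\}\subset {\mathcal{N}}_G$ escaping to infinity. Then I would set $\widetilde\psi_i := \theta \cdot \psi_i$ and check the three required facts: $\widetilde\psi_i$ is Nash (product of Nash functions), $\widetilde\psi_i(x) = \theta(x)\psi_i(x) > 0$ for $x \in V_i$ since both factors are positive there, $\widetilde\psi_i(x) < 0$ for $x \in {\mathcal{N}}_G\setminus U_i$ since $\theta > 0$ and $\psi_i < 0$ there, and finally $\widetilde\psi_i(x_k) = \theta(x_k)\psi_i(x_k) \to 0$ as $|x_k|\to\infty$ provided $\psi_i$ does not grow too fast. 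Since $\psi_i$ is a Nash function on a semialgebraic set, it has polynomially bounded growth, so $|\psi_i(x)| \le C(1 + \|x\|^2)^N$ for some $C, N$; replacing $\theta$ by $\theta^{N+1} = (1+\|x\|^2)^{-(N+1)}$ if necessary, which is still positive and Nash, guarantees $\widetilde\psi_i(x_k)\to 0$. Renaming $\widetilde\psi_i$ back to $\psi_i$ gives the statement.

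The only genuine point requiring care — and the step I expect to be the main (minor) obstacle — is the growth estimate: one must invoke the fact that a Nash function on a semialgebraic subset of $\R^{2n}$ is semialgebraic, hence by the Łojasiewicz-type bounds for semialgebraic functions satisfies $|\psi_i(x)| \le C(1+\|x\|^2)^N$ on all of ${\mathcal{N}}_G$, so that a high enough power of $\theta$ overwhelms it; this is what lets us conclude the decay at infinity uniformly rather than just along tame directions. Everything else (positivity, negativity, Nash regularity of products and powers of $1+\|x\|^2$) is routine. I would therefore organize the proof as: (1) recall $\psi_i$ and its sign properties from the previous paragraph; (2) exhibit $\theta$ and record its positivity, Nash-ness, and decay; (3) invoke polynomial growth of $\psi_i$ to fix the exponent; (4) define $\widetilde\psi_i = \theta^{N+1}\psi_i$ and verify the sign conditions and the limit, then rename.
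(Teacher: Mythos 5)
Your proposal is correct and is essentially the paper's own argument: the paper likewise replaces $\psi_i$ by $\psi_i(x)/(1+|x|^2)^{N_i}$ for $N_i$ large enough, noting the sign conditions are preserved. Your explicit appeal to the polynomial growth bound for semi-algebraic (Nash) functions to justify that a large enough exponent forces decay is the same step the paper leaves implicit.
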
   

\begin{proof}
{\rm 
If $\psi_i$ is a Nash function, then with any $N_i \in (\N \setminus \{ 0 \})$, the function  
$$\psi'_i (x) = \frac{\psi_i (x)}{{(1 + \vert x \vert^2)}^{N_i}},$$
where $x \in {\mathcal{N}}_G$, is also a Nash function, for $i = 1, \ldots, p.$ The Nash function $\psi'_i$ satisfies the property: $\psi'_i$ is positive on $V_i$ and negative on ${\mathcal{N}}_G \setminus U_i$. With $N_i$ large enough,  $\psi'_i (x_k)$ tends to  zero when $\{x_k\} \subset {\mathcal{N}}_G$ tends to infinity, for $i = 1, \ldots, p$. We replace the function $\psi_i$ by $\psi'_i$.
}
\end{proof}

\begin{definition} \label{defVGnew}
{\rm 
Let the Nash functions $\psi_i$ and $\rho$ be such that $\psi_i (x_k)$ tends to  zero and $\rho(x_k)$ tends to infinity  when $x_k \subset {\mathcal{N}}_G$ tends to infinity. 
Define a variety ${\mathcal{V}}_G$ associated to $(G, \rho)$ as 
$${\mathcal{V}}_G : = \overline{(F, \psi_1, \ldots, \psi_p)({\mathcal{N}}_G)}.$$ 
}
\end{definition}

\begin{remark}
{\rm 
For a given polynomial mapping $G: \C^n \to \C^{n-1}$, the variety ${\mathcal{V}}_G$ is not unique. It depends on the choice of the function $\rho$ and the Nash functions $\psi_i$.
}
\end{remark}
\begin{proposition} \label{ProdimVG}
{\rm The real dimension of ${\mathcal{V}}_G$ is $2n - 2$.   
}
\end{proposition}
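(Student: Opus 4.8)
The plan is to show that $\mathcal{V}_G$ has real dimension $2n-2$ by tracking the dimension through each stage of the construction in Definition \ref{defVGnew}. First I would recall from Proposition \ref{prodimSingGpsi} that for a generic $G$ the Milnor set $\mathcal{M}_G$ is a smooth semi-algebraic manifold of dimension $2n-2$. Since $\mathcal{N}_G = \mathcal{M}_G \setminus F^{-1}(K_0(F))$ is obtained from $\mathcal{M}_G$ by removing a proper algebraic subset (the set $K_0(F)$ of critical values of $F = G\vert_{\mathcal{M}_G}$ has dimension strictly less than $2n-2$, hence so does its preimage under the, generically, finite-to-one map), the set $\mathcal{N}_G$ is an open dense subset of $\mathcal{M}_G$ and so also has dimension $2n-2$.

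Next I would observe that the map $(F, \psi_1, \dots, \psi_p) : \mathcal{N}_G \to \R^{2n-2} \times \R^p$ is, on each open piece $U_i$ of the covering, an injective semi-algebraic map (indeed $F$ alone is a diffeomorphism onto its image there, by the cited Lemma 2.1 of \cite{Valette}), and in any case it is injective as a semi-algebraic map because the first component $F$ is a local diffeomorphism and the extra coordinates only refine the separation. Injective semi-algebraic maps preserve dimension, so $(F, \psi_1, \dots, \psi_p)(\mathcal{N}_G)$ has dimension $2n-2$. Finally, taking the topological closure does not change the dimension of a semi-algebraic set, so $\mathcal{V}_G = \overline{(F, \psi_1, \dots, \psi_p)(\mathcal{N}_G)}$ has dimension $2n-2$ as claimed.

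The one point that requires a little care — and which I would expect to be the main (minor) obstacle — is verifying that $(F, \psi_1, \dots, \psi_p)$ really is injective on all of $\mathcal{N}_G$, not merely on each $U_i$ separately. This is exactly the purpose of Mostowski's separation construction: if $x, y \in \mathcal{N}_G$ with $F(x) = F(y)$, choose $i$ with $x \in V_i$; then $\psi_i(x) > 0$, and either $y \in U_i$, in which case $x = y$ because $F\vert_{U_i}$ is a diffeomorphism, or $y \notin U_i$, in which case $\psi_i(y) < 0 < \psi_i(x)$, so the two points have distinct images. Thus $(F, \psi_1, \dots, \psi_p)$ is injective, and the rest of the argument is the standard dimension-theoretic bookkeeping for semi-algebraic sets (injective semi-algebraic maps and topological closures both preserve dimension — see \cite{Jean Paul}). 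This mirrors the analogous dimension statement for the varieties $V_F$ in \cite{Valette} and \cite{Thuy3}.
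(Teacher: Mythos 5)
Your proposal is correct and follows essentially the same route as the paper: track the dimension $2n-2$ from $\mathcal{M}_G$ (Proposition \ref{prodimSingGpsi}) through $\mathcal{N}_G$, the image under the separating map, and the closure. The paper gets the middle step slightly differently --- it uses that $F$ is a local immersion to conclude $\dim F(\mathcal{M}_G)=2n-2$ and then removes $K_0(F)$, rather than invoking injectivity of the full map $(F,\psi_1,\dots,\psi_p)$ --- but your Mostowski-separation injectivity argument is valid and the two versions are interchangeable.
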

\begin{proof}
By Proposition \ref{prodimSingGpsi}, in the generic case, the real dimension of ${\mathcal{M}}_G$ is $2n - 2$. Moreover, $F$ is  a  local immersion in a neighbourhood of a point in  ${\mathcal{M}}_G$. So, the real dimension of $F({\mathcal{M}}_G)$ is also $2n - 2$. Since  
$$F({\mathcal{N}}_G) = F({\mathcal{M}}_G) \setminus K_0(F),$$
so the real dimension of $F({\mathcal{N}}_G)$ is $2n - 2$. By Definition \ref{defVGnew}, the real dimension of ${\mathcal{V}}_G$ is $2n - 2$.

\end{proof}

\begin{definition} [see, for example, \cite{Cidinha}] \label{defSG}
{\rm 
Let $G : \C^n \to \C^{n-1}$ be a  polynomial mapping and $\rho : \C^n \to \R$  a real function such that $\rho \geq 0$. Define 
$${\mathcal{S}}_G:=\{ \alpha \in \C^{n-1}: \exists \{z_k\} \subset Sing(G, \rho),  \text{ such that } z_k \text{ tends to infinity}, G(z_k) \text{ tends to } \alpha \}.$$
}
\end{definition}

\begin{remark}  \label{remarkSG}
{\rm 
 a) For any real function $\rho : \C^n \to \R$ such that $\rho \geq 0$, we have 
$$B(G) \subset {\mathcal{S}}_G \subset K_{\infty}(G),$$
where $B(G)$ is the bifurcation set  and $K_{\infty}(G)$ is the set of asymptotic critical values of $G$ (see, for example, \cite{Cidinha}).

\medskip 

b) By Lemma \ref{lemmaSingGpsi}, we have $Sing(G, \rho) = {\mathcal{M}}_G,$ 
so the set ${\mathcal{S}}_G$ can be written
$${\mathcal{S}}_G:=\{ \alpha \in \C^{n-1}: \exists \{x_k\} \subset {\mathcal{M}}_G,  \text{ such that }x_k \text{ tends to infinity}, G(x_k) \text{ tends to } \alpha \}.$$
}
\end{remark}

\begin{definition} \label{defsiginfi}
{\rm The {\it singular set at infinity} of the variety ${\mathcal{V}}_G$ is the set 
$$\{ \beta \in {\mathcal{V}}_G: \exists \{x_k \} \subset {\mathcal{N}}_G, x_k \to \infty, (G, \psi_1, \ldots, \psi_p)(x_k) \to \beta \}.$$

}
\end{definition}

\begin{proposition} \label{singularVG}
{\rm The singular set at infinity 
of the variety ${\mathcal{V}}_G$  is contained in the set ${\mathcal{S}}_G \times \{0_{\R^{ p}}\}.$ 
}
\end{proposition}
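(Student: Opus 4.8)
The plan is to unwind the definitions and compare two limiting conditions: membership in the singular set at infinity of ${\mathcal{V}}_G$, which is a condition on sequences $\{x_k\}\subset{\mathcal{N}}_G$ going to infinity with $(G,\psi_1,\dots,\psi_p)(x_k)$ converging, and membership in ${\mathcal{S}}_G\times\{0_{\R^p}\}$, which asks that the $G$-component of the limit be an asymptotic value reached along a sequence in ${\mathcal{M}}_G$ and that the remaining $p$ coordinates vanish. So let $\beta=(\alpha,\beta_1,\dots,\beta_p)$ lie in the singular set at infinity, witnessed by a sequence $\{x_k\}\subset{\mathcal{N}}_G$ with $|x_k|\to\infty$ and $(G,\psi_1,\dots,\psi_p)(x_k)\to\beta$. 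First I would observe that the $G$-component forces $G(x_k)\to\alpha$ along a sequence of points of ${\mathcal{N}}_G\subset{\mathcal{M}}_G$ tending to infinity, so by Remark \ref{remarkSG}(b) (the reformulation of Definition \ref{defSG}) we get $\alpha\in{\mathcal{S}}_G$ immediately.

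Next I would handle the $\psi_i$-coordinates. Recall that in the Lemma preceding Definition \ref{defVGnew} the Nash functions $\psi_i$ were rechosen precisely so that $\psi_i(x_k)\to 0$ whenever $\{x_k\}\subset{\mathcal{N}}_G$ tends to infinity. Applying this to our sequence gives $\psi_i(x_k)\to 0$ for each $i=1,\dots,p$; but $\psi_i(x_k)\to\beta_i$ by hypothesis, so $\beta_i=0$ for all $i$. Therefore $\beta=(\alpha,0_{\R^p})\in{\mathcal{S}}_G\times\{0_{\R^p}\}$, which is the claimed inclusion. I would also remark that $\beta$ does indeed lie in ${\mathcal{V}}_G$, since ${\mathcal{V}}_G=\overline{(F,\psi_1,\dots,\psi_p)({\mathcal{N}}_G)}$ and $\beta$ is a limit of points $(F,\psi_1,\dots,\psi_p)(x_k)=(G,\psi_1,\dots,\psi_p)(x_k)$ — here one uses that $F=G|_{{\mathcal{M}}_G}$ so the two expressions agree on ${\mathcal{N}}_G$ — so the statement is internally consistent.

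There is essentially no hard analytic step: the whole argument is a matter of reading off the defining limit conditions and invoking the decay property of the $\psi_i$ that was arranged by construction. The only point that requires a word of care is the identification $F=G|_{{\mathcal{M}}_G}$ so that the tuple defining ${\mathcal{V}}_G$ has $G$ (not some reparametrization) in its first $n-1$ complex coordinates, which is exactly why the limit of the first component can be interpreted as an element of ${\mathcal{S}}_G$ via Definition \ref{defSG}. If anything is an ``obstacle'' it is purely bookkeeping: making sure the ambient coordinates of ${\mathcal{V}}_G$ are indexed consistently between Definition \ref{defVGnew}, Definition \ref{defsiginfi} and the statement, so that ``$\{0_{\R^p}\}$'' refers to the $\psi$-block and nothing else. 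I would write the proof in three short sentences: extract $\alpha\in{\mathcal{S}}_G$ from the first component, extract $\beta_i=0$ from the decay of $\psi_i$, and conclude.

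\begin{proof}
Let $\beta$ be a point in the singular set at infinity of ${\mathcal{V}}_G$, and let $\{x_k\}\subset{\mathcal{N}}_G$ be a sequence with $x_k\to\infty$ and $(G,\psi_1,\dots,\psi_p)(x_k)\to\beta$. Write $\beta=(\alpha,\beta_1,\dots,\beta_p)$ with $\alpha\in\C^{n-1}$ and $\beta_i\in\R$. Since $G(x_k)\to\alpha$ and $\{x_k\}\subset{\mathcal{N}}_G\subset{\mathcal{M}}_G$ tends to infinity, Remark \ref{remarkSG}(b) gives $\alpha\in{\mathcal{S}}_G$. On the other hand, by the choice of the Nash functions $\psi_i$ (the Lemma preceding Definition \ref{defVGnew}), $\psi_i(x_k)\to 0$ for every $i=1,\dots,p$; since also $\psi_i(x_k)\to\beta_i$, we conclude $\beta_i=0$ for all $i$. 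Hence $\beta=(\alpha,0_{\R^p})\in{\mathcal{S}}_G\times\{0_{\R^p}\}$, as claimed.
\end{proof}
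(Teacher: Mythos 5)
Your proof is correct and follows essentially the same route as the paper's: extract a sequence $\{x_k\}\subset{\mathcal{N}}_G$ witnessing $\beta$, observe $G(x_k)\to\alpha\in{\mathcal{S}}_G$ via Remark \ref{remarkSG}(b), and use the decay of the $\psi_i$ to force the last $p$ coordinates of $\beta$ to vanish. The paper's version additionally records some dimension counts ($\dim_{\R}{\mathcal{S}}_G\times\{0_{\R^p}\}\le 2n-4$ versus $\dim_{\R}{\mathcal{V}}_G=2n-2$), but those are not needed for the containment itself, so omitting them does not leave a gap.
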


\begin{proof}

At first, by Proposition \ref{prodimSingGpsi}, for the generic case, the real dimension of ${\mathcal{V}}_G$ associated to $G: \C^n \to \C^{n-1}$ is $2n - 2$. Moreover, we have the following facts:

a) $ {\mathcal{S}}_G \subset K_{\infty}(G),$

b) $\dim_{\C} (K_{\infty}(G)) \leq n-2$ (see \cite{Ku}), so $\dim_{\R} (K_{\infty}(G)) \leq 2n-4$.

Hence, we have $\dim_{\R}{\mathcal{S}}_G \times \{0_{\R^{ p}}\}  \leq 2n - 4$. Moreover, by Proposition \ref{ProdimVG}, we have  $\dim_{\R}{\mathcal{V}}_G = 2n -2$. Let $\beta$ be a singular point at infinity of the variety  ${\mathcal{V}}_G$, 
then 
there exists a sequence  $\{x_k\}$ in ${\mathcal{N}}_G$ tending to infinity 
such that $(G, \psi_1, \ldots, \psi_p)(x_k)$ tends to $\beta$. 
Assume that $G(x_k)$ tends to $\alpha$, then $\alpha$ belongs to  ${\mathcal{S}}_G$. Moreover, the Nash function $\psi_i(x_k)$ tends to 0, for any $i = 1, \ldots, p$. So $\beta=(\alpha, 0_{\R^{p}})$ belongs to ${\mathcal{S}}_G \times \{0_{\R^{p}}\}.$ 
Notice that, by Definition of ${\mathcal{V}}_G$, the set ${\mathcal{S}}_G \times \{0_{\R^{p}}\}$ is contained in ${\mathcal{V}}_G$. Then ${\mathcal{S}}_G \times \{0_{\R^{p}}\}$ contains the singular set at infinity of the variety ${\mathcal{V}}_G$.

\end{proof}

\begin{remark} 
{\rm The singular set at infinity of ${\mathcal{V}}_G$ depends on the choice of the function $\rho$, since when  $\rho$ changes, the set ${\mathcal{S}}_G$ also changes. But, the property 
$B(G) \subset {\mathcal{S}}_G$ does not depend on the choice of the function $\rho$ (see, for example, \cite{Cidinha}).
}
\end{remark}

The previous results show the following Proposition:

\begin{proposition}
{\rm Let $G: \C^n \to \C^{n-1}$ be a polynomial mapping such that $K_0(G) = \emptyset$ and let $\rho: \C^n \to \R$ be a real function such that 
$$\rho = a_1 \vert z_1 \vert ^2 + \cdots + a_n \vert z_n \vert^2,$$
where $\Sigma_{i = 1, \ldots, n} a^2_i \neq 0$, $\, a_i  \geq 0$ and $a_i \in \R$ for $i = 1, \ldots, n.$ Then, there exists a real variety ${\mathcal{V}}_G$ in $\R^{2n - 2 + p}$, where $p > 0$, such that: 
 
1) The real dimension of ${\mathcal{V}}_G$ is $2n -2$,

2) The singular set at infinity 
of the variety ${\mathcal{V}}_G$  is contained in ${\mathcal{S}}_G \times \{0_{\R^{p}}\}.$
}
\end{proposition}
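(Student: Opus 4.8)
The plan is to assemble the statement directly from the structural results already established, since it is essentially a summary proposition. First I would invoke Definition \ref{defVGnew} to fix, once $\rho$ of the prescribed quadratic form is given, the associated variety ${\mathcal{V}}_G = \overline{(F, \psi_1, \ldots, \psi_p)({\mathcal{N}}_G)}$, where $F = G|_{{\mathcal{M}}_G}$, ${\mathcal{N}}_G = {\mathcal{M}}_G \setminus F^{-1}(K_0(F))$, and $\psi_1, \ldots, \psi_p$ are the Nash functions produced by Mostowski's Separation Lemma and then rescaled so that $\psi_i(x_k) \to 0$ as $x_k \to \infty$ in ${\mathcal{N}}_G$. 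This construction takes place in $\R^{2n-2+p}$ with $p > 0$ the number of open sets in the covering $\{U_1, \ldots, U_p\}$ of ${\mathcal{N}}_G$ on which $F$ is a diffeomorphism onto its image; existence of such a finite covering is exactly Lemma 2.1 of \cite{Valette} applied to the local map $F : \R^{2n-2} \to \R^{2n-2}$, which is legitimate because $\dim_{\R} {\mathcal{M}}_G = 2n-2$ by Proposition \ref{prodimSingGpsi} and $F$ is a local immersion there.

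Next, claim 1) is immediate: it is precisely the content of Proposition \ref{ProdimVG}, which shows $\dim_{\R} {\mathcal{V}}_G = 2n - 2$ by tracking the dimension through ${\mathcal{M}}_G \rightsquigarrow F({\mathcal{M}}_G) \rightsquigarrow F({\mathcal{N}}_G) \rightsquigarrow {\mathcal{V}}_G$, using that removing the critical-value preimage $F^{-1}(K_0(F))$ and composing with the extra Nash coordinates does not change dimension. Claim 2) is Proposition \ref{singularVG}: its proof identifies the singular set at infinity of ${\mathcal{V}}_G$ (Definition \ref{defsiginfi}) by taking a sequence $x_k \in {\mathcal{N}}_G$ with $x_k \to \infty$ and $(G, \psi_1, \ldots, \psi_p)(x_k) \to \beta$, observing that if $G(x_k) \to \alpha$ then $\alpha \in {\mathcal{S}}_G$ by Definition \ref{defSG} (using Remark \ref{remarkSG}(b) to rewrite ${\mathcal{S}}_G$ in terms of ${\mathcal{M}}_G$), while $\psi_i(x_k) \to 0$ by the choice of the Nash functions, so $\beta = (\alpha, 0_{\R^p}) \in {\mathcal{S}}_G \times \{0_{\R^p}\}$. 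So the proof of the Proposition is just: apply Definition \ref{defVGnew} to obtain ${\mathcal{V}}_G \subset \R^{2n-2+p}$, then cite Proposition \ref{ProdimVG} for 1) and Proposition \ref{singularVG} for 2).

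The one genuinely nontrivial point — the one I would spell out rather than merely cite — is why this $p$ is strictly positive and finite, i.e. why the covering $\{U_1, \ldots, U_p\}$ exists with the stated diffeomorphism property and why a single chart does not suffice in general; here one leans on the semi-algebraic Lemma 2.1 of \cite{Valette} together with the fact that $F$ need not be globally injective even where it is a local diffeomorphism, so one needs the Mostowski separation trick to disentangle the sheets $F(V_i)$ inside a common target. I do not expect any real obstacle: everything reduces to results proved earlier in the excerpt (Propositions \ref{prodimSingGpsi}, \ref{ProdimVG}, \ref{singularVG} and Definition \ref{defVGnew}), so the proof is a two-line assembly. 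If I wanted to be careful I would also remark that the dimension bound $\dim_{\C} K_{\infty}(G) \leq n-2$ from \cite{Ku}, used inside the proof of Proposition \ref{singularVG}, guarantees $\dim_{\R} ({\mathcal{S}}_G \times \{0_{\R^p}\}) \leq 2n-4 < 2n-2$, so the singular set at infinity is genuinely a proper lower-dimensional subset of ${\mathcal{V}}_G$, which is what makes statement 2) meaningful for the subsequent (intersection) homology computations.
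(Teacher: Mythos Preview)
Your proposal is correct and matches the paper's approach exactly: the paper introduces this proposition with the sentence ``The previous results show the following Proposition'' and gives no separate proof, so it is indeed just an assembly of Definition \ref{defVGnew}, Proposition \ref{ProdimVG} for 1), and Proposition \ref{singularVG} for 2). Your additional remarks on $p>0$ and the dimension bound from \cite{Ku} are helpful elaborations but not required.
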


\begin{remark}
{\rm 
The variety ${\mathcal{V}}_G$ depends on the choice of the function $\rho$ and the functions $\psi_i$. 
Form now, we denote by ${\mathcal{V}}_G(\rho)$ any variety ${\mathcal{V}}_G$ associated to $(G, \rho)$. If we refer to ${\mathcal{V}}_G$, that means a variety ${\mathcal{V}}_G$ associated to $(G, \rho)$ for any $\rho$.
}
\end{remark}

\begin{remark}
{\rm 
1) In the construction of singular varieties ${\mathcal{V}}_G$, we can put $F : = (G, \varphi)_{\vert {\mathcal{M}}_G }$, that means $F$ is the restriction of $(G,\varphi)$ on ${\mathcal{M}}_G$. In this case, since the dimension of ${\mathcal{M}}_G$  is $2n - 2$  then locally, in a neighbourhood of any point $x_0$ in ${\mathcal{M}}_G$, 
   we get a mapping $F : \R^{2n-2}  \to \R^{2n - 1}$. The construction of singular varieties ${\mathcal{V}}_G$ can be applied also in this case. 

2) The construction of singular varieties ${\mathcal{V}}_G$ can be applied for polynomial mappings $G: \C^n \to \C^p$ where $p < n -1$ if the Milnor set ${\mathcal{M}}_G$ is smooth is this case. 

}
\end{remark}

\subsection{A variety ${\mathcal{V}}_G$ in the case of the Broughton's Example}
\begin{example} \label{exBroughton}
{\rm We compute in this example a variety ${\mathcal{V}}_G$ in the case of the Broughton's example \cite{Broughton}:
$$G: \C^2 \to \C, \quad \quad \quad  G(z, w) = z + z^2w.$$
 We see that $K_0(G) = \emptyset$ since the system of equations
$G_z = G_w = 0$
has no solutions. 

\noindent Let us denote 
$$z = x_1 + ix_2, \quad \quad \quad w=x_3 + ix_4,$$
where $x_1, x_2, x_3, x_4 \in \R$. Consider $G$ as a real polynomial mapping, we have 
$$G(x_1, x_2, x_3, x_4) = (x_1 + x_1^2 x_3 - x_2^2x_3 - 2x_1x_2x_4, x_2 + 2x_1x_2x_3+x_1^2x_4 - x_2^2x_4).$$
Let $\rho = \vert w \vert^2$, then 
$$\varphi = \frac{1}{1 + \rho} = \frac{1}{1 + \vert w \vert^2} = \frac{1}{1 + x_3^2 + x_4^2}.$$
The Jacobian matrix of $(G, \rho)$ is 
$$D_{\R}(G, \rho) ={ \begin{pmatrix} 1 + 2x_1x_3 - 2 x_2x_4 & -2x_2x_3 - 2x_1x_4 & x_1^2 - x_2^2 & -2x_1x_2  \\  2x_2x_3 + 2x_1x_4 & 1 + 2x_1x_3 - 2x_2x_4 & 2x_1x_2 & x_1^2 - x_2^2 \\ 0 & 0 & 2x_3 & 2x_4\end{pmatrix}}.$$
By an easy computation,  we have ${\mathcal{M}}_G = Sing(G, \rho)  = M_1 \cup M_2,$
where 

$M_1 := \{ (x_1, x_2, 0, 0) : x_1, x_2 \in \R \}$,

$ M_2 = \left\{ (x_1, x_2, x_3, x_4) \in \R^4 : 1+2x_1x_3 - 2x_2x_4 = 2x_2x_3 + 2x_1x_4 = 0  \right\}.$

\noindent Let us consider $G$ as a real mapping from $\R^4_{(x_1, x_2, x_3, x_4)}$ to  $\R^2_{(\alpha_1, \alpha_2)}$, then:

\medskip 

a) If $x = (x_1, x_2, 0, 0) \in M_1$, we have $G(x) = (x_1, x_2)$.

\medskip 

b) If $x = (x_1, x_2, x_3, x_4) \in M_2$, then we have 
 $ G(x) = (\alpha_1, \alpha_2), $ 
 where 
$$\alpha_1 = \frac{-x_3}{4(x_3^2 + x_4^2)}, \quad \quad \quad \alpha_2 = \frac{x_4}{4(x_3^2 + x_4^2)}.$$ 
Let $F:= G_{\vert {\mathcal{M}}_G}$. We can check easily that $K_0(F) = \emptyset$. 
Choosing  ${\mathcal{M}}_G$ as a  covering of  ${\mathcal{M}}_G$ itself. We choose the Nash function $\psi = \varphi$, then $\psi$ is positive on all ${\mathcal{M}}_G$. So, by Definition \ref{defVGnew}, we have 
$${\mathcal{V}}_G= \overline{(F, \varphi)({\mathcal{M}}_G}) = \overline{(G, \varphi)({\mathcal{M}}_G}) =  (G, \varphi)(M_1) \cup (G, \varphi)(M_2) \cup ({\mathcal{S}}_G \times {0_{\R}}),$$ 
where $(G, \varphi,): \R^4_{(x_1, x_2, x_3, x_4)} \to \R^3_{(\alpha_1, \alpha_2, \alpha_3)}$. Then 

\medskip 

a) $(G, \varphi)(M_1)$ is the plane $\{\alpha_3 = 1\} \subset \R^3_{(\alpha_1, \alpha_2, \alpha_3,)}.$

\medskip 

b) Assume that $(\alpha_1, \alpha_2, \alpha_3) \in (G, \varphi)(M_2)$, and let 
$$x_3 = r cos \theta, \quad \quad \quad  x_4 = r sin \theta,$$
where $r \in \R, \, r>0$ and $0 \leq \theta \leq 2\pi,$ then 
$$\alpha_1^2 + \alpha_2^2 = \frac{1}{16r^2}, \quad \quad \quad \alpha_3 =  \frac{1}{1 + r^2}.$$
So $(G, \varphi)(M_2)$ is a 2-dimensional open cone. In fact, when $r$ tends to infinity, then $\alpha_1, \alpha_2$  and $\alpha_3$ tend to 0, but the origin does not belong to this cone.

\medskip

Moreover, by an easy computation, we can verify that  the set ${\mathcal{S}}_G$ is $0 = (0, 0) \in \R^2_{(\alpha_1, \alpha_2)}$. So the origin 0 of $ \R^3_{(\alpha_1, \alpha_2, \alpha_3)}$ belongs to ${\mathcal{V}}_G$. In conclusion, the variety ${\mathcal{V}}_G$ is the union of the plane $\alpha_3 = 1$ and 
 a 2-dimensional cone $\mathcal{C}$  with vertex $0$, where the cone $\mathcal{C}$ tends to infinity 
 and asymptotic to the plane $\alpha_3 = 1$ in $ \R^3_{(\alpha_1, \alpha_2, \alpha_3)}$ (see Figure \ref{FigureExBr}). 

\begin{figure}[h!] 
\begin{center}
\includegraphics[scale=0.5]{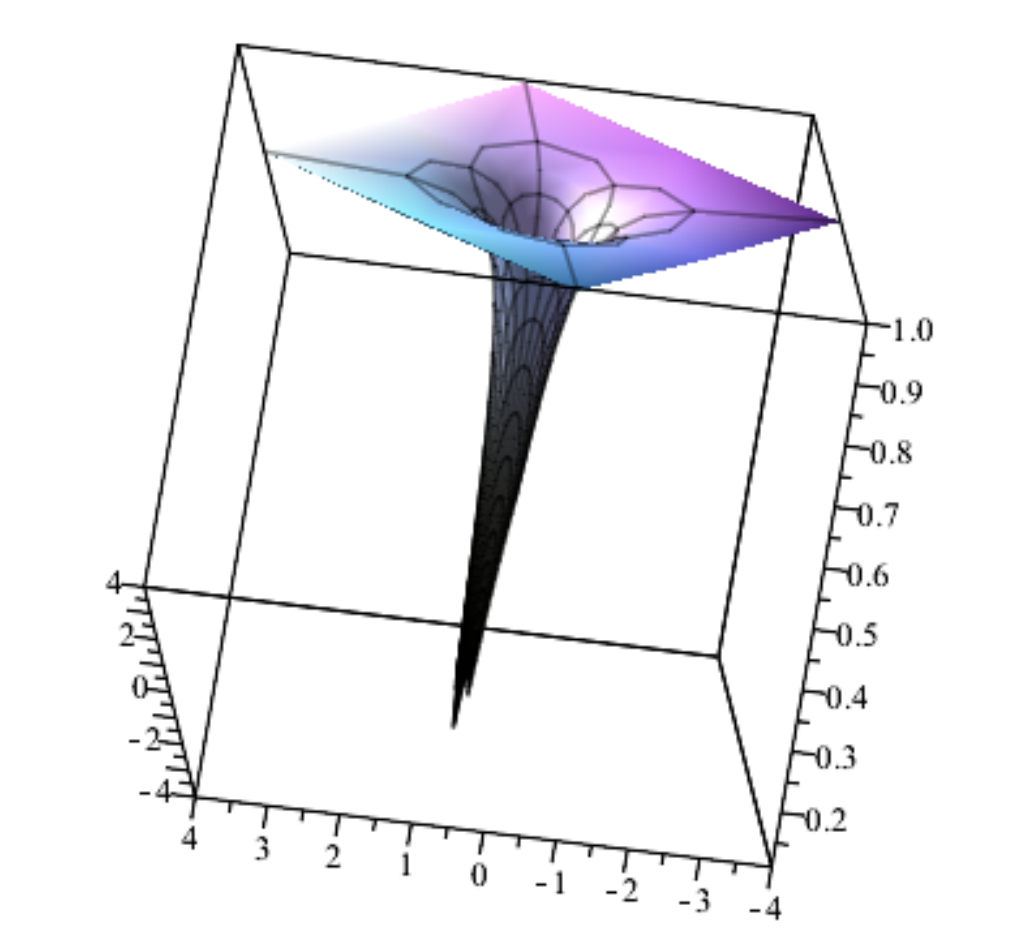}
\caption{A variety  ${\mathcal{V}}_G$ in the case of  the Broughton's Example $G(z, w) = z + z^2w$.}
\label{FigureExBr}
\end{center}\end{figure}

}
\end{example}

\begin{remark}
{\rm We can use the Proposition \ref{remarkmixedfunction} with the view of mixed functions (see \cite{oka}) to determine the variety ${\mathcal{M}}_G$. Let us return to the Example \ref{exBroughton}. In this case $\rho = \vert w \vert^2$, then 
$${\mathcal{M}}_G = \left \{ (z, w) \in \C^2 :  \frac{\partial G}{\partial z} \overline{ w} = 0 \right \}.$$
Hence we have $(1 + 2 zw) \overline{w} = 0$, that implies the following two cases:

\medskip 

i) $\overline{w} = 0$:  We have $x_3 = x_4 = 0$, where $w = x_3 + ix_4.$

\medskip 

ii) $\overline{w} \neq 0$ and $z = - \frac{1}{2w} = - \frac{\overline{w}}{2\vert w \vert^2} $: We have 
$$x_1 = \frac{-x_3}{2(x_3^2 + x_4^2)}, \quad \quad \quad x_2 = \frac{x_4}{2(x_3^2 + x_4^2)} ,$$
where $z = x_1 + ix_2$. 

So we get ${\mathcal{M}}_G = M_1 \cup M_2$ as the computations and notations in the Example  \ref{exBroughton}.

\medskip

}
\end{remark}

\section{Results}

\begin{theorem} \label{cidinhathuy1}
{\rm 
\medskip 

Let $G = (G_1, G_2): \C^3 \rightarrow \C^2$ be a polynomial mapping such that $K_0(G) = \emptyset$. If one the groups $IH_2^{\overline{t}, c}({\mathcal{V}}_G, \R)$, $\,IH_2^{\overline{t}, cl}({\mathcal{V}}_G, \R)$, $\, H_2^c({\mathcal{V}}_G, \R)$   and $H_2^{cl}({\mathcal{V}}_G, \R)$  is trivial then the bifurcation set $B(G)$ is empty.  
}
\end{theorem}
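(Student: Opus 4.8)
The plan is to prove the contrapositive: assuming $B(G) \neq \emptyset$, I will show that none of the four homology groups in dimension two can vanish, by exhibiting a non-trivial two-cycle in $\mathcal{V}_G$ that survives in each of the four theories. The guiding principle is that the bifurcation set of $G$ is detected by the change in topology (Euler characteristic, or more precisely the vanishing homology) of the fibers of $F = G|_{\mathcal{M}_G}$: since $\mathcal{M}_G$ meets each level curve $G = c$ in a finite set of Morse points of $\rho|_{G=c}$ (Proposition \ref{prodimSingGpsi}), the restriction $F$ is a local diffeomorphism onto $\C^{n-1} = \C^2$, and $\mathcal{V}_G$ is essentially a disjoint union of sheets over $\C^2$ glued along $\mathcal{S}_G \times \{0\}$. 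A point $\alpha \in B(G) \subset \mathcal{S}_G$ (Remark \ref{remarkSG}a) is therefore a genuine singular point at infinity of $\mathcal{V}_G$ (Proposition \ref{singularVG}), and near it the number of sheets of $F$ jumps.

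The key steps, in order, would be: (i) Choose $\alpha_0 \in B(G)$. Since $B(G) \subset \mathcal{S}_G$ and $\dim_\C \mathcal{S}_G \leq n-2 = 1$, while $B(G)$ itself has real codimension at least two in $\C^2$, I can pick a small real $3$-sphere (or, after a generic choice, a small $2$-sphere) $\Sigma$ in $\C^2$ around $\alpha_0$ avoiding the rest of $\mathcal{S}_G$; over $\Sigma$, $F$ is a covering, hence the pull-back of a generating cycle of $\Sigma$ to the appropriate sheet of $\mathcal{V}_G$ gives a candidate $2$-cycle $\gamma$. (ii) Show $\gamma$ is not a boundary in $\mathcal{V}_G$: if it bounded a $3$-chain, pushing that chain forward by the sheet-projection would bound the corresponding sphere in $\C^2 \setminus \{\alpha_0\}$ at the level of $F({\mathcal{N}}_G)$, which cannot happen because the number of sheets over the two components of $\C^2 \setminus \Sigma$ differs — this is exactly the statement that $\alpha_0 \in B(G)$, translated through the work of Hà–Nguyen \cite{VuiThang} relating $B(G)$ to the jump of the Euler characteristic of fibers. (iii) Verify the cycle is allowable for total perversity $\overline{t}$: for $m = 2n-2 = 4$, total perversity allows chains meeting the singular stratum (of real dimension $\leq 2n-4 = 2$) freely in the relevant range, so $IC_2^{\overline{t}}$ contains all semialgebraic $2$-chains, and $IH_2^{\overline{t}} = H_2$; this collapses the intersection-homology statements to the ordinary ones. (iv) Handle the compact- versus closed-support distinction: the cycle $\gamma$ constructed over a small sphere is compact, so it lives in both $H_2^c$ and $H_2^{cl}$; its non-triviality in $H_2^{cl}$ follows a fortiori, and for $H_2^c$ one argues as in (ii) with compactly supported chains. (v) Conclude that all four groups are non-trivial, contradicting the hypothesis, so $B(G) = \emptyset$.

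The main obstacle I anticipate is step (ii) — rigorously producing the non-bounding $2$-cycle and proving it is not a boundary. The delicate point is that $\mathcal{V}_G$ is built by gluing the images $F(V_i)$ of a closed semialgebraic cover, not the open cover $\{U_i\}$, so one must be careful that the sheets of $F$ are faithfully recorded in $(F, \psi_1, \dots, \psi_p)(\mathcal{N}_G)$ and that the separating Nash functions $\psi_i$ (which tend to $0$ at infinity) really do keep distinct sheets disjoint away from $\mathcal{S}_G \times \{0\}$; this is where Mostowski's Separation Lemma and the construction in \cite{Valette} are used. Once the local model near $\alpha_0$ is identified as (number-of-sheets-many) copies of a punctured ball in $\C^2$ joined at a single point $(\alpha_0, 0)$, the homology computation is the standard one for a wedge-like singularity and the link of $(\alpha_0,0)$ carries the required non-trivial class precisely when the sheet count jumps, i.e. precisely when $\alpha_0 \in B(G)$. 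A secondary technical point is making the choice of sphere $\Sigma$ generic enough that it is transverse to the stratification and misses $\mathcal{S}_G \setminus \{\alpha_0\}$; the bound $\dim_\C \mathcal{S}_G \leq 1$ from Kurdyka–Orro–Simon (cited via \cite{Ku}) is what makes this possible in $\C^2$, and this is exactly the feature that fails for $n \geq 4$ and forces the extra hypothesis in Theorem \ref{cidinhathuy2}.
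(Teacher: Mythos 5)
Your proposal takes a genuinely different route from the paper, and unfortunately two of its key steps have gaps that I do not see how to close.

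\textbf{The shape of the cycle is wrong.} You propose to take a small sphere $\Sigma$ around $\alpha_0 \in B(G) \subset \C^2$, lift it through $F$, and show it is a non-bounding $2$-cycle. But in $\C^2 \cong \R^4$, the link of a point is $\S^3$, not $\S^2$, and any small $\S^2 \subset \R^4 \setminus \{\alpha_0\}$ already bounds in the complement, so its lift is a poor candidate for a non-trivial class; on the other hand, a transverse link of the $2$-real-dimensional set $\mathcal{S}_G$ is only a circle $\S^1$, which lives in degree one, not two. The paper's $2$-cycle $\sigma = c - e$ is something else entirely: $c$ is the image under $h_F = (F,\psi_1,\dots,\psi_p)$ of a complex Puiseux arc in $\mathcal{M}_G$ tending to infinity with $G(\gamma) \to x_0 \in \mathcal{S}_G$, so $c$ is a singular $2$-simplex that \emph{passes through} the singular point $(x_0, 0_{\R^p})$ of $\mathcal{V}_G$ rather than linking it, and $e$ is a $2$-chain in the regular part capping off $\partial c$. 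The resulting sphere is a curve-at-infinity object, and the non-bounding argument lives in the source space: the paper intersects the preimages $A = h_F^{-1}(|\sigma| \cap \cdots)$, $B = h_F^{-1}(|\tau| \cap \cdots)$ with large spheres $\S^5(0,R) \subset \R^6$, retracts onto the tangent cone at infinity $C_\infty(A) = \S^1 \cdot a$, applies Lemma~\ref{lemmathuy1} to trap everything in $V \cap \S^5(0,1)$ where $V$ is the zero locus of the leading forms $\hat G$, and derives the contradiction from $\dim_\R (V \cap \S^5(0,1)) \leq 1$ (using $\dim_\C V \leq 1$, which is where \cite{Ku} enters). Your "sheet-count jumps, so push-forward gives a contradiction" plan replaces this with a target-space argument that would still need a precise mechanism for producing a non-bounding cycle and, as stated, does not supply one.

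\textbf{The intersection-homology reduction in step (iii) is false.} For $m = \dim_\R \mathcal{V}_G = 4$, the total perversity is $\overline t = (0,0,0,1,2)$, so a $2$-chain $Y$ is $(\overline t, 2)$-allowable against the codimension-two singular stratum only if $\dim_\R (Y \cap X_2) \leq 2 - 2 + t_2 = 0$. A $2$-chain lying inside $\mathcal{S}_G \times \{0_{\R^p}\}$ (which can be $2$-real-dimensional) fails this, so $IC_2^{\overline t}(\mathcal{V}_G) \subsetneq C_2(\mathcal{V}_G)$ and $IH_2^{\overline t} \neq H_2$ in general. This is exactly why the paper devotes an explicit paragraph to checking $(\overline t, 2)$-allowability of $c$ stratum by stratum (generically moving the Puiseux arc off low-codimension strata), rather than appealing to a collapse of intersection homology to ordinary homology. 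You need to keep the four theories separate and verify allowability for the cycle you build; the allowability computation in the paper is not a formality that can be skipped.

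\textbf{What is right.} Your overall contrapositive strategy, the appeal to $B(G) \subset \mathcal{S}_G$ and the bound $\dim_\C \mathcal{S}_G \leq 1$, and your diagnosis that $n \geq 4$ needs the extra hypothesis precisely because the Kurdyka--Orro--Simon dimension bound no longer gives $\dim_\C V \leq 1$ for free, are all aligned with the paper. What is missing is the source-space argument at infinity (Puiseux arc, tangent cone $C_\infty$, Lemma~\ref{lemmathuy1}, and the semialgebraic-families Lemma~\ref{valette5}) which is the engine of the non-bounding proof, and a correct allowability check.
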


\begin{theorem} \label{cidinhathuy2}
{\rm 
\medskip 

Let $G = (G_1, \ldots, G_{n-1}): \C^n \rightarrow \C^{n-1}$ $(n \geq 4)$ be a polynomial mapping such that $K_0(G) = \emptyset$ and $\Rank_{\C} {(D \hat {G_i})}_{i=1,\ldots,n-1} > n-3$, where $\hat {G_i}$ is the leading form of $G_i$, that is the homogenous part of highest degree of $G_i$,  for $i = 1, \ldots, n-1$.  Then if one the groups 
$IH_2^{\overline{t}, c}({\mathcal{V}}_G, \R)$, $\, IH_2^{\overline{t}, cl}({\mathcal{V}}_G, \R)$, 
 $\, H_2^{c}({\mathcal{V}}_G, \R)$, $\, H_2^{cl}({\mathcal{V}}_G, \R)$,   
 $\, H_{2n-4}^{c}({\mathcal{V}}_G, \R)$   and  $H_{2n-4}^{cl}({\mathcal{V}}_G, \R)$ is trivial then the bifurcation set $B(G)$ is empty.
}
\end{theorem}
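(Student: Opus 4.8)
The plan is to argue by contraposition and to reduce to Theorem \ref{cidinhathuy1} by a generic linear section of the target; the leading-form hypothesis is exactly what keeps such a section well behaved. So assume $B(G)\neq\emptyset$; we must show that each of the six groups in the statement is nontrivial.

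First I would pass to a section. Fix $\alpha\in B(G)\subset{\mathcal S}_G$ and a generic affine plane $L\cong\C^2\subset\C^{n-1}$ through $\alpha$; set $X:=G^{-1}(L)$ and $\widetilde G:=G|_X\colon X\to L$. Since $K_0(G)=\emptyset$, the map $G$ is a submersion, so $X$ is a smooth affine threefold and $\widetilde G$ is again a submersion, $K_0(\widetilde G)=\emptyset$; moreover the fibres of $\widetilde G$ over $L$ coincide with the fibres of $G$ over $L$. The hypothesis $\Rank_{\C}(D\hat{G_i})_{i=1,\dots,n-1}>n-3$ is exactly the condition inherited by $\widetilde G$ in suitable coordinates on $\C^n$ and on $L$: the leading forms of the components of $\widetilde G$ still have the analogous rank, the Milnor set ${\mathcal M}_{\widetilde G}$ is again smooth of the expected dimension, and --- what is essential --- the behaviour at infinity is not destroyed by the section, so that $B(\widetilde G)\supseteq B(G)\cap L\ni\alpha$; in particular $B(\widetilde G)\neq\emptyset$. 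Hence the construction of Section \ref{VG} applies to $\widetilde G$, producing a variety ${\mathcal V}_{\widetilde G}$, and the argument of Theorem \ref{cidinhathuy1} (which uses only that the source is a smooth affine threefold) gives that $IH_2^{\overline t,c}({\mathcal V}_{\widetilde G},\R)$, $IH_2^{\overline t,cl}({\mathcal V}_{\widetilde G},\R)$, $H_2^{c}({\mathcal V}_{\widetilde G},\R)$ and $H_2^{cl}({\mathcal V}_{\widetilde G},\R)$ are all nonzero.

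Next I would transfer this back. With the data $(\{U_i\},\{V_i\},\{\psi_i\})$ defining ${\mathcal V}_G$ chosen so that their traces on ${\mathcal N}_{\widetilde G}\subset{\mathcal N}_G$ are admissible data for $\widetilde G$, the variety ${\mathcal V}_{\widetilde G}$ is, up to a linear projection of the $\psi$-coordinates, a generic affine-linear section of ${\mathcal V}_G$ of the expected codimension, and a Lefschetz-type theorem for such sections of semi-algebraic sets propagates the non-vanishing of $H_2$ from ${\mathcal V}_{\widetilde G}$ to ${\mathcal V}_G$. Now, since $K_0(G)=\emptyset$ and $G$ is generic (Proposition \ref{prodimSingGpsi}), ${\mathcal V}_G$ is a pure $(2n-2)$-dimensional semi-algebraic set whose singular locus lies in ${\mathcal S}_G\times\{0_{\R^p}\}$ (Proposition \ref{singularVG}), of real dimension $\le 2n-4$ by Remark \ref{remarkSG} and the bound $\dim_{\C}K_\infty(G)\le n-2$; hence the singular stratum has codimension $\ge 2$, so a $2$-cycle, resp.\ a $(2n-4)$-cycle, of ${\mathcal V}_G$ in general position with respect to it is automatically $\overline t$-, resp.\ $\overline 0$-, allowable. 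Consequently a nonzero $H_2$-class of ${\mathcal V}_G$ yields nonzero $IH_2^{\overline t,c}$ and $IH_2^{\overline t,cl}$ (besides $H_2^{c}$ and $H_2^{cl}$); the Goresky--MacPherson Poincar\'e duality of Section 2 then gives $IH_{2n-4}^{\overline 0,cl}({\mathcal V}_G,\R)\neq 0\neq IH_{2n-4}^{\overline 0,c}({\mathcal V}_G,\R)$, and the same general-position comparison identifies these with $H_{2n-4}^{cl}({\mathcal V}_G,\R)$ and $H_{2n-4}^{c}({\mathcal V}_G,\R)$. Thus all six groups are nonzero, which contradicts the hypothesis; therefore $B(G)=\emptyset$.

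I expect the transfer step to be the real obstacle. The variety ${\mathcal V}_G$ is not canonically attached to $G$ --- it depends on $\rho$ and on the Nash functions $\psi_i$ --- so one must genuinely arrange the choices for $G$ and for $\widetilde G$ so that a generic affine section of ${\mathcal V}_G$ is homeomorphic (or at least $2$-equivalent) to a bona fide ${\mathcal V}_{\widetilde G}$, and one must check that the leading-form rank hypothesis both passes to $\widetilde G$ and keeps the bifurcation value $\alpha$ alive in the slice (equivalently, that $B(\widetilde G)\supseteq B(G)\cap L$ for generic $L$). This geometry of the section --- together with the verification that Theorem \ref{cidinhathuy1} really only needs a smooth affine threefold as source --- is where the work lies; the homological duality at the end is routine.
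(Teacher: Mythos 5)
Your proposal takes a completely different route from the paper, and the route has gaps that I do not think can be repaired as written. The paper does not slice the target: it reruns the Puiseux-arc argument of Theorem \ref{cidinhathuy1} verbatim in $\C^n$, with $\mathcal{M}_G\subset\R^{2n}$ and the sphere $\S^{2n-1}(0,R)$ in place of $\S^5(0,R)$. In that argument the extra hypothesis $\Rank_{\C}(D\hat{G_i})_{i=1,\dots,n-1}>n-3$ is used for exactly one purpose: it forces the zero locus $V$ of the leading forms $(\hat{G_1},\dots,\hat{G}_{n-1})$ to have $\dim_{\C}V\le 1$, hence $\dim_{\R}\bigl(V\cap\S^{2n-1}(0,1)\bigr)\le 1$, so that the cycle $\S^1.a$ (the tangent cone at infinity of the Puiseux arc) cannot bound a chain in the finite union of circles $E'_0\subseteq V\cap\S^{2n-1}(0,1)$ --- this is what Lemma \ref{lemmathuy1} feeds into. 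You have misidentified the role of this hypothesis: it is not about keeping a linear section of the target ``well behaved''. The $H_{2n-4}$ statements are then obtained from Goresky--MacPherson duality $IH_2^{\overline{t},c}\cong IH_{2n-4}^{\overline{0},cl}$, which is the one part of your sketch that matches the paper.

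The decisive gap is your transfer step. There is no Lefschetz-type theorem for generic affine sections of \emph{real} semi-algebraic sets: all Lefschetz theorems (including the stratified versions) rest on the complex structure of the ambient variety, whereas $\mathcal{V}_G\subset\R^{2n-2+p}$ is merely a real semi-algebraic set of dimension $2n-2$; a generic real affine section of such a set carries no information about its $H_2$ in general (a plane section of a torus in $\R^3$ already fails to see $H_1$). So nonvanishing of $H_2(\mathcal{V}_{\widetilde G})$ does not propagate to $\mathcal{V}_G$, and this is the whole point of your reduction. Two further unproved claims compound the problem: (i) $B(\widetilde G)\supseteq B(G)\cap L$ for generic $L$ is a genuine statement about atypical values at infinity and does not follow from $\widetilde G$ being a submersion with the same fibres; (ii) even granting the reduction, running ``the argument of Theorem \ref{cidinhathuy1}'' for $\widetilde G\colon X\to L$ requires the analogue of Lemma \ref{lemmathuy1} and the bound $\dim_{\C}\hat{G}^{-1}(0)\le 1$ for the leading forms of $\widetilde G$ on the affine threefold $X$ --- which is exactly where the rank hypothesis would have to enter, a connection your sketch never makes. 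The paper's direct higher-dimensional argument avoids all of this.
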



Before proving these Theorems, we recall some necessary Definitions and Lemmas.

\begin{definition}
A {\it semi-algebraic family of sets (parametrized by $\R$)} 
{\rm is a semi-algebraic set $A \subset \R^n \times \R$, the last variable being considered as parameter.}
\end{definition}
\begin{remark}
{ \rm A semi-algebraic set $A \subset \R^n \times \R$ will be considered as a family parametrized by $t \in \R$. We write $A_t$, for ``the fiber of $A$ at $t$'', {\it i.e.}:
$$ A_t : = \{ x \in \R^n : (x, t) \in A\}. $$}
\end{remark}
\begin{lemma} [\cite{Valette}] \label{valette5} 
{\rm Let $\beta$ be a $j$-cycle and let $A \subset \R^n \times \R$ be a compact semi-algebraic family of sets with $\vert \beta \vert \subset A_t$ for any $t$. Assume that  $\vert \beta \vert$ bounds a $(j+1)$-chain in each $A_t$, $t >0$ small enough. Then $ \beta $ bounds a chain in $A_0$.}
\end{lemma}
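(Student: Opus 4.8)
The plan is to localise the parameter near $0$, reduce the problem to one about the total space of the family over a short interval $[0,\epsilon]$, and then transport the given filling down onto the fibre $A_{0}$ along a semi-algebraic retraction. By hypothesis I may fix $\epsilon>0$ small enough that $|\beta|$ bounds a $(j+1)$-chain $\Gamma$ in $A_{\epsilon}$, and I set $A^{+}:=A\cap(\R^{n}\times[0,\epsilon])$; as a closed semi-algebraic subset of the compact semi-algebraic family $A$, the set $A^{+}$ is itself compact and semi-algebraic. Since $|\beta|\subset A_{t}$ for \emph{every} $t$, the prism $|\beta|\times[0,\epsilon]$ is contained in $A$, hence in $A^{+}$; consequently, identifying $\beta$ with the cycle it determines in the slice $\R^{n}\times\{0\}$, one has, with the usual sign conventions, $\beta=\partial\bigl(\Gamma-(\beta\times[0,\epsilon])\bigr)$ in $A^{+}$, where $\Gamma$ is read in the slice $\R^{n}\times\{\epsilon\}$. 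In particular $\beta$ bounds a $(j+1)$-chain in $A^{+}$. Note also that $|\beta|\subset A_{0}$, so that ``$\beta$ bounds a chain in $A_{0}$'' is meaningful.

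The main point — and the step I expect to require the real work — is to produce, for $\epsilon$ small enough, a semi-algebraic retraction $r\colon A^{+}\to A_{0}$ which is the identity on $A_{0}$; equivalently, that $A^{+}$ semi-algebraically (strong) deformation retracts onto $A_{0}$. Here I would appeal to Hardt's semi-algebraic triviality theorem for the projection $A\to\R$: since the set of atypical parameters is finite, after shrinking $\epsilon$ the family $A\cap(\R^{n}\times(0,\epsilon])$ is semi-algebraically trivial over $(0,\epsilon]$, say through a parameter-preserving semi-algebraic homeomorphism $h\colon A_{\epsilon}\times(0,\epsilon]\to A\cap(\R^{n}\times(0,\epsilon])$. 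Compactness of $A^{+}$ forces each semi-algebraic arc $t\mapsto h(x,t)$ to have a limit as $t\to0^{+}$, and this limit necessarily lies in $A_{0}$; the monotonicity and curve-selection properties of semi-algebraic maps ensure that the limit exists and depends semi-algebraically and continuously on $x$, giving a semi-algebraic map $\ell\colon A_{\epsilon}\to A_{0}$ and exhibiting $A^{+}$, for $\epsilon$ small, as semi-algebraically homeomorphic to the mapping cylinder of $\ell$. A mapping cylinder strong deformation retracts onto its base, which provides $r$. This is the only place where the compactness of the family and Hardt's theorem are genuinely used; everything else is formal, and the careful version of this mapping-cylinder-neighbourhood statement can be found in \cite{Valette}.

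Finally I would push the filling down. Since $r$ is the identity on $A_{0}$ and $|\beta|\subset A_{0}$, on the level of chains $r_{\sharp}\beta=\beta$; applying $r_{\sharp}$ to the identity of the first paragraph gives $\beta=r_{\sharp}\,\partial\bigl(\Gamma-(\beta\times[0,\epsilon])\bigr)=\partial\,r_{\sharp}\bigl(\Gamma-(\beta\times[0,\epsilon])\bigr)$, and $r_{\sharp}\bigl(\Gamma-(\beta\times[0,\epsilon])\bigr)$ is a $(j+1)$-chain whose support is contained in $r(A^{+})=A_{0}$. Hence $\beta$ bounds a chain in $A_{0}$, which is the assertion. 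Equivalently, the deformation retraction makes the inclusion-induced map $H_{j}(A_{0},\R)\to H_{j}(A^{+},\R)$ injective while the class of $\beta$ becomes zero in $H_{j}(A^{+},\R)$ by the first paragraph, so it was already zero in $H_{j}(A_{0},\R)$.
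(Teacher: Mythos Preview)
The paper does not supply a proof of this lemma: it is quoted from \cite{Valette} and used as a black box. There is therefore nothing in the paper itself to compare your argument against.

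That said, your outline is the natural one: bound $\beta$ in the total space $A^{+}=A\cap(\R^{n}\times[0,\epsilon])$ via the prism $|\beta|\times[0,\epsilon]$ together with the given filling $\Gamma\subset A_{\epsilon}$, then retract $A^{+}$ onto $A_{0}$ and push the filling down with $r_{\sharp}$. The one step that deserves more care is the construction of the retraction. Hardt triviality over $(0,\epsilon]$ gives the arcs $t\mapsto h(x,t)$, and boundedness of a semi-algebraic curve forces each arc to converge to some $\ell(x)\in A_{0}$; but continuity of $x\mapsto\ell(x)$ is \emph{not} a formal consequence of the facts you invoke --- the closure over $t=0$ of the graph of $h$ can a priori be strictly larger than the graph of $\ell$, and easy semi-algebraic examples show that pointwise limits of continuous definable one-parameter families need not be continuous. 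One clean fix is to bypass Hardt and invoke a semi-algebraic triangulation of the pair $(A^{+},A_{0})$, which makes the deformation retraction onto $A_{0}$ simplicial and manifestly continuous; alternatively one can quote directly the semi-algebraic local conic structure (mapping-cylinder neighbourhood) theorem, which is exactly the ``careful version'' you defer to \cite{Valette}. With that ingredient granted, your last paragraph is correct as written.
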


\begin{definition}[\cite{Valette}] 

{\rm Given a subset $X \subset \R^{n}$, we define the {\it ``tangent cone at infinity''}, called ``{\it contour apparent \`a l'infini}''
in \cite{Thuy1} by: 
$$ C_{\infty}(X):=\{\lambda \in \S^{n-1}(0,1) \text{ such that } \exists \eta : (t_0, t_0 + \varepsilon] \rightarrow X \text { semi-algebraic,}$$
$$ \qquad \qquad \qquad \underset{t \rightarrow t_0}{\lim} \eta (t) = \infty, \underset{t \rightarrow t_0}{\lim}\frac{\eta(t)}{\vert \eta(t) \vert} = \lambda \}. $$
}
\end{definition}

\begin{lemma} [\cite{Thuy3}] \label{lemmathuy1}
{\rm Let $G=(G_1, \ldots , G_m) : \R^n \to \R^m$ be a polynomial mapping and $V$ the zero locus of   $\hat{G}: = (\hat{G_1}, \ldots, \hat{G}_m)$, where $\hat{G_i}$ is the leading form of  $G_i$. If $X$ is a subset of $\R^n$ such that $G(X)$ is bounded, then $C_\infty(X)$ is a subset of $\S^{n-1}(0,1) \cap V$, where $V = {\hat{G}}^{-1}(0)$.}
\end{lemma}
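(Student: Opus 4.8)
The plan is to argue directly from the definition of $C_\infty(X)$, using an asymptotic expansion of $G$ along a semi-algebraic curve tending to infinity. Fix $\lambda \in C_\infty(X)$. By definition there is a semi-algebraic arc $\eta : (t_0, t_0 + \varepsilon] \to X$ with $\underset{t \to t_0}{\lim}\,\eta(t) = \infty$ and $\underset{t \to t_0}{\lim}\,\eta(t)/|\eta(t)| = \lambda$. Since the definition already forces $\lambda \in \S^{n-1}(0,1)$, the only thing left to prove is that $\hat{G}(\lambda) = 0$, i.e. $\hat{G_i}(\lambda) = 0$ for each $i = 1, \ldots, m$.

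First I would write each component in homogeneous pieces: $G_i = \sum_{k=0}^{d_i} P_{i,k}$, where $d_i = \deg G_i$, the polynomial $P_{i,k}$ is homogeneous of degree $k$, and $P_{i,d_i} = \hat{G_i}$ (one may assume $d_i \ge 1$; a constant component $G_i$ makes the boundedness hypothesis vacuous on that slot and the statement is read accordingly). Writing a nonzero point as $x = r u$ with $r = |x|$ and $u \in \S^{n-1}(0,1)$, homogeneity gives $G_i(ru) = \sum_{k=0}^{d_i} r^k P_{i,k}(u)$, so
$$\frac{G_i(ru)}{r^{d_i}} = \hat{G_i}(u) + \sum_{k=0}^{d_i-1} r^{k-d_i} P_{i,k}(u).$$
Each $P_{i,k}$ is bounded on the compact sphere $\S^{n-1}(0,1)$, hence the remainder sum is $O(1/r)$ uniformly in $u$, and $G_i(ru)/r^{d_i} \to \hat{G_i}(u)$ as $r \to \infty$, uniformly for $u$ on the sphere.

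Next I would substitute $r = |\eta(t)|$ and $u = \eta(t)/|\eta(t)|$. Because $|\eta(t)| \to \infty$, $\hat{G_i}$ is continuous, and $\eta(t)/|\eta(t)| \to \lambda$, the uniform convergence above yields $G_i(\eta(t))/|\eta(t)|^{d_i} \to \hat{G_i}(\lambda)$ as $t \to t_0$. Suppose, for contradiction, that $\hat{G_i}(\lambda) \neq 0$ for some $i$. Then for $t$ near $t_0$ the quantity $|G_i(\eta(t))|$ is comparable to $|\hat{G_i}(\lambda)|\,|\eta(t)|^{d_i}$, and since $d_i \ge 1$ and $|\eta(t)| \to \infty$ we get $|G_i(\eta(t))| \to \infty$. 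But $\eta(t) \in X$, contradicting the hypothesis that $G(X)$ is bounded. Therefore $\hat{G_i}(\lambda) = 0$ for every $i$, i.e. $\lambda \in \S^{n-1}(0,1) \cap V$, which is what the Lemma asserts.

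There is no serious obstacle here; the only points that need a little care are the uniformity in $u$ of the expansion (this is exactly what lets one pass from a limit over fixed directions to the limit along the moving direction $\eta(t)/|\eta(t)|$), and the harmless degenerate situations (a constant component, or $\eta$ approaching infinity non-monotonically) — neither affects the conclusion, since the argument uses only $|\eta(t)| \to \infty$. Semi-algebraicity of $\eta$ is not needed for the estimate itself, but it guarantees that the two limits in the definition of $C_\infty(X)$ exist and keeps the discussion inside the semi-algebraic category used throughout the paper.
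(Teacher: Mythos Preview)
Your argument is correct. The paper does not supply its own proof of this lemma; it is quoted from \cite{Thuy3} and stated without proof. Your direct approach --- decomposing each $G_i$ into homogeneous pieces, using the uniform estimate $G_i(ru)/r^{d_i} = \hat{G_i}(u) + O(1/r)$ on the unit sphere, and deriving a contradiction with the boundedness of $G(X)$ along the arc $\eta$ --- is precisely the standard argument and is what one finds in the cited source. The only cosmetic point is your aside on constant components: in the context of the paper all $G_i$ have positive degree, so that edge case does not arise, but your remark handles it adequately.
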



\begin{proof} [Proof of the Theorem \ref{cidinhathuy1}] 

%
%

Recall that in this case, $\dim_{\R} {\mathcal{V}}_G = 4$ (Proposition \ref{ProdimVG}) and ${\mathcal{V}}_G \setminus ({\mathcal{S}}_G \times \{ 0_{\R^{p}} \})$ is not smooth in general. Consider a stratification of ${\mathcal{V}}_G$, the strata of which are the strata of  ${\mathcal{S}}_G \times \{ 0_{\R^{p}} \}$ union the strata of the stratification of $K_0(F)$ defined by the  rank,  according to Thom \cite{Thom}. 
  Assume that $B(G) \neq \emptyset$, then by Remark \ref{remarkSG}, the set  ${\mathcal{S}}_G$ is not empty. This means that there exists a complex Puiseux arc  in ${\mathcal{M}}_G$ 
$$\gamma : D(0, \eta) \rightarrow \R^{6}, \quad \gamma = u z^{\alpha} + \ldots, $$
(with $\alpha$ negative integer and $u$
 is an unit vector of $\R^{6}$)  tending to infinity  such a  way that $G(\gamma)$ converges to a generic point $x_0 \in {\mathcal{S}}_G$. 
Then, the mapping $h_F \circ \gamma$, where $h_F = (F, \varphi_1, \ldots,\varphi_p)$ and $F$ is the restriction of $G$ on $ {\mathcal{M}}_G$ provides a singular $2$-simplex in ${\mathcal{V}}_G$ that we will denote by $c$. We prove now the simplex $c$ is $(\overline{t},2)$-allowable for total perversity $\overline{t}$.  In fact, by \cite{Ku}, is this case  we have $\dim_{\C} {\mathcal{S}}_G \leq 1$, then $\alpha = \codim_{\R} {\mathcal{S}}_G \geq 2$. The condition
$$ 0 = \dim_{\R} \{ x_0 \} = \dim_{\R} ( ({\mathcal{S}}_G \times \{ 0_{\R^{p}} \}) \cap \vert c \vert) \leq 2 - \alpha+ t_{\alpha},$$
implies $t_{\alpha} \geq \alpha - 2,$
with $\alpha \geq 2$, which is true for total perversity $\bar{t}$. Take now a stratum $V_i$ of  ${\mathcal{V}}_G \setminus ({\mathcal{S}}_G \times \{0_{\R^{p}}\})$. 
Denote by $\beta = \codim_{\R} V_i.$ If $\beta \geq 2$, we can choose the Puiseux arc $\gamma$ such that $c$ lies in the regular part of ${\mathcal{V}}_G \setminus ({\mathcal{S}}_G \times \{0_{\R^{p}}\})$. In fact, this comes from the generic position of transversality.  So $c$ is $(\overline{t},2)$-allowable in this case. We need to consider only the cases $\beta = 0$ and $\beta = 1$. We have the following two cases:

1) If $V_i$ intersects $c$, again by the generic position of transversality, we can choose the Puiseux arc $\gamma$ such that $0 \leq \dim_{\R} ( V_i \cap \vert c \vert) \leq 1$. The condition 
 $$\dim_{\R} ( V_i \cap \vert c \vert) \leq 2 - \beta+ t_{ \beta}$$
  holds since $2 - \beta+ t_{ \beta} \geq 1$, for $\beta = 0$ and $\beta = 1$. 

2) If $V_i$ does not meet $c$, then the condition 
$$ - \infty = \dim_{\R} \emptyset = \dim_{\R} ( V_i \cap \vert c \vert) \leq 2 - \beta+ t_{\beta}$$
 holds always. 

\noindent So the simplex $c$ is $(\overline{t},2)$-allowable for total perversity $\overline{t}$.

From here, the proof of the Theorem follows the ideas of \cite{Valette}:  We can always choose the Puiseux arc such that the support of $\partial c$ lies in the regular part  of ${\mathcal{V}}_G \setminus ({\mathcal{S}}_G \times \{0_{\R^{p}}\})$. 
 We have 
$$H_1(\Reg({\mathcal{V}}_G \setminus ({\mathcal{S}}_G \times \{0_{\R^{p}}\}))) = 0,$$ 
 then the chain $\partial c $ bounds 
a singular chain  
$e \in C^2(\Reg({\mathcal{V}}_G \setminus ({\mathcal{S}}_G \times \{0_{\R^{p}}\})) )$, where $e$ is a chain with compact supports or closed supports.
So $\sigma = c- e$ is a $(\overline{t},2)$-allowable cycle of  ${\mathcal{V}}_G$, with compact supports or closed supports.

We claim that $\sigma$ may not bound a  $3$-chain in ${\mathcal{V}}_G$.
Assume otherwise, {\it i.e.} assume that there is a  chain $\tau \in C_3({\mathcal{V}}_G)$, satisfying $\partial \tau=\sigma$. Let 
$$A:= h_F^{-1}(\vert \sigma \vert \cap ({\mathcal{V}}_G \setminus ({\mathcal{S}}_G \times \{0_{\R^{p}}\}))),$$ 
$$B:= h_F^{-1}(\vert \tau \vert \cap ({\mathcal{V}}_G \setminus ({\mathcal{S}}_G \times \{0_{\R^{p}}\}))).$$ 
By definition, $C_\infty(A)$ and $C_{\infty}(B)$ are subsets of $\S^{5}(0,1)$. 
 Observe that, in a neighborhood of infinity,  $A$ coincides with the support of the  Puiseux arc $\gamma$.  The set $C_\infty(A)$ is equal to $\S^1.a$ (denoting the orbit of $a \in \C^3$ under the action of $\S^1$ on $\C^3$, $(e^{i\eta},z) \mapsto  e^{i\eta}z$). Let $V$ be the zero locus of the leading forms  $\hat{G}: = (\hat{G_1}, \hat {G_2})$.  Since $G(A)$ and $G(B)$ are bounded, by  Lemma \ref{lemmathuy1}, $C_\infty(A)$ and $C_{\infty}(B)$ are subsets of $V \cap \S^{5}(0,1).$

\noindent For $R$ large enough, the sphere $\S^{5}(0, R)$ with center 0 and radius $R$ in $\R^{6}$ is transverse to $A$ and $B$ (at regular points). Let 
$$\sigma_R : = \S^{5}(0 , R) \cap A, \qquad \tau_R : = \S^{5}(0, R) \cap B.$$
 Then $\sigma_R$ is a chain bounding the chain $\tau_R$. 
Consider a semi-algebraic strong deformation retraction 
$\Phi : W \times [0;1] \rightarrow \S^1.a$, where $W$ is a neighborhood of  $\S^1.a$ in $\S^{5}(0,1)$ onto $\S^1.a$. Considering $R$ as a parameter, we have the following semi-algebraic families of  chains: 

1) ${\tilde{\sigma }}_R :=\frac{\sigma _R}{R},$
for $R$ large enough, then ${\tilde{\sigma}}_R$ is contained in $W$,

2)  $\sigma{'}_{R} = \Phi_1({\tilde{\sigma}}_R)$, where $\Phi_1(x) : = \Phi(x, 1), \qquad x \in W$,

3) $\theta_{R} = \Phi({\tilde{\sigma}}_R)$, we have 
 $\partial \theta_R  = \sigma'_R - {\tilde{\sigma}}_R ,$

4) $\theta{'}_{R} = \tau_R + \theta_R$, we have 
 $\partial \theta'_R  =  \sigma'_R.$

\noindent As, near infinity, $\sigma_R$ coincides with the intersection of the support of the arc $\gamma$ with $\S^{5}(0, R)$, for $R$ large enough the class of $\sigma'_R$ in $\S^1.a$ is nonzero.

Let $r = 1/R$, consider $r$ as a parameter, and let $\{ {\tilde{\sigma}}_r \}$, $\{ \sigma'_r \}$, $\{ \theta_r \}$ as well as $\{ \theta'_r \}$ the corresponding  semi-algebraic families of  chains. 

Denote by $E_r \subset \R^{6} \times \R$ the closure of $|\theta_r|$, and set $E_0 : = (\R^{6} \times \{ 0 \}) \cap E$. Since the strong deformation retraction  $\Phi$ is the identity on $C_{\infty}(A) \times [0,1]$, we see that  
$$E_0 \subset \Phi(C_{\infty}(A) \times [0,1]) = \S^1. a \subset V \cap \S^{5}(0,1).$$

Denote by $E'_r \subset \R^{6} \times \R$ the closure of $|\theta'_r|$, and set  $E'_0 : = (\R^{6} \times \{ 0 \}) \cap E'$. Since $A$ bounds $B$, then $C_{\infty}(A)$ is contained in $C_{\infty}(B)$. We have 
$$E'_0 \subset E_0 \cup C_{\infty}(B) \subset V \cap \S^{5}(0,1).$$

The class of $\sigma'_r$ in $\S^1.a$ is, up to a product with a nonzero constant, equal to the generator of $\S^1.a$. Therefore, since $\sigma'_r$ bounds the chain $\theta'_r$, the cycle $\S^1.a$ must bound a chain in $|\theta'_r|$ as well. By Lemma \ref{valette5}, this implies that $\S^1.a$ bounds a chain in $E'_0$ which is included in $V \cap \S^{5}(0,1)$.

 The set $V$ is a projective variety which is an union of cones in $\R^{6}$. Since   $\dim _{\C} V \leq 1$, so $\dim_{\R} V \leq 2 $ and  $\dim_{\R} V \cap \S^{5}(0,1) \leq 1.$
The cycle $\S^1 . a$ thus bounds a chain in $E'_0\subseteq V \cap \S^{5}(0,1)$, which is a finite union of circles, that provides a contradiction.
\end{proof}

%

Now we provides the proof of the Theorem \ref{cidinhathuy2}.

\begin{proof} [Proof of the Theorem \ref{cidinhathuy2}] 

The proof of this Theorem follows the idea of \cite{Thuy3} and the proof of Theorem \ref{cidinhathuy1}.

 Assume that $B(G) \neq \emptyset$. Similarly to the proof of the Theorem \ref{cidinhathuy1} and with the same notations in this proof but for the general case, we have:  since 
 $$\Rank_{\C} {(D \hat {G_i})}_{i=1,\ldots,n-1} > n-3$$
then 
$$ \corank_{\C}  {(D \hat {G_i})}_{i=1,\ldots,n-1} = \dim _{\C} V \leq 1,$$
 so $\dim_{\R} V \leq 2 $ and  $\dim_{\R} V \cap \S^{2n-1}(0,1) \leq 1.$
The cycle $\S^1 . a$ bounds a chain in $E'_0\subseteq V \cap \S^{2n-1}(0,1)$, which is a finite union of circles, that provides a contradiction. So we have 
$$IH_2^{\overline{t}, c}({\mathcal{V}}_G, \R) \ne 0, \quad IH_2^{\overline{t}, cl}({\mathcal{V}}_G, \R) \ne 0, \quad 
 H_2^{c}({\mathcal{V}}_G, \R) \ne 0  \text{ and } H_2^{cl}({\mathcal{V}}_G, \R) \ne 0.$$  
 From the  Goresky-MacPherson Poincar\'e  duality Theorem, we have
$$IH_2^{\overline{t}, c}({\mathcal{V}}_G, \R) = IH_{2n-4}^{\overline{0},cl}({\mathcal{V}}_G, \R) \text{ and }  IH_2^{\overline{t}, cl}({\mathcal{V}}_G, \R) = IH_{2n-4}^{\overline{0}, c}({\mathcal{V}}_G, \R),$$ 
that implies 
 $H_{2n-4}^{c}({\mathcal{V}}_G, \R) \ne 0$ and $H_{2n-4}^{cl}({\mathcal{V}}_G, \R) \ne 0$.
%
\end{proof}

\begin{remark}
{\rm 
The variety ${\mathcal{V}}_G$ associated to a polynomial mapping $G: \C^n \to \C^{n-1}$ is not unique, but the result of the theorems \ref{cidinhathuy1} and \ref{cidinhathuy2}  hold for any variety ${\mathcal{V}}_G$ among the constructed  varieties ${\mathcal{V}}_G$ associated to $G$. 

}
\end{remark}


With the conditions of Theorem \ref{cidinhathuy2}, the result of  \cite{VuiThang} also holds,  hence as a consequence of Theorem \ref{cidinhathuy2} in this paper 
and Theorems 2.1 and 2.6 in \cite{VuiThang}, we obtain the following corollary.

\begin{corollary} \label{coThuyCidinha3}
{\rm Let $G = (G_1, \ldots, G_{n-1}): \C^n \to \C^{n-1}$, where $n \geq 4$, be a polynomial mapping such that $K_0(G) = \emptyset$. Assume that the zero set 
$\{ z \in \C^n : \hat{G}_i (z) =0, i = 1, \ldots, n-1 \}$ has complex dimension one, where $\hat{G}_i$ is the leading form of $G_i$. 
 If the Euler characteristic of $G^{-1}(z^0)$ is bigger than that of the generic fiber, where $z^0 \in \C^{n-1}$, then 

\begin{enumerate}
\item[1)] $ H_2({\mathcal{V}}_G(\rho), \R) \ne 0,$ for any $\rho$,
\item[2)] $H_{2n-4}({\mathcal{V}}_G(\rho), \R) \ne 0$, for any $\rho$,
\item[3)] $IH_2^{\overline{t}}({\mathcal{V}}_G(\rho), \R) \ne 0,$ for any $\rho$, 
 where $\overline{t}$ is the total perversity.
\end{enumerate}
 }
\end{corollary}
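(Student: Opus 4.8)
The plan is to derive Corollary~\ref{coThuyCidinha3} as a direct logical consequence of Theorem~\ref{cidinhathuy2} together with the characterization of the bifurcation set coming from \cite{VuiThang}. First I would observe that the hypotheses of the corollary — namely $K_0(G) = \emptyset$ and the zero set of the leading forms $\{z \in \C^n : \hat G_i(z) = 0,\ i = 1,\dots,n-1\}$ having complex dimension one — are exactly the hypotheses of Theorem~\ref{cidinhathuy2}. Indeed, the condition that $\hat G^{-1}(0)$ has complex dimension one is equivalent to $\corank_{\C}(D\hat G_i)_{i=1,\dots,n-1} \le 1$, i.e. $\Rank_{\C}(D\hat G_i)_{i=1,\dots,n-1} > n-3$, which is the extra rank hypothesis in Theorem~\ref{cidinhathuy2}. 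So Theorem~\ref{cidinhathuy2} applies to any variety ${\mathcal{V}}_G(\rho)$ constructed from $G$ and any admissible $\rho$.

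Next I would invoke Theorems~2.1 and 2.6 of \cite{VuiThang}: under the stated genericity of the leading forms, a point $z^0 \in \C^{n-1}$ lies in the bifurcation set $B(G)$ if and only if the Euler characteristic of the fiber $G^{-1}(z^0)$ differs from that of the generic fiber (and here, since the jump can only be an increase, the hypothesis ``$\chi(G^{-1}(z^0))$ is bigger than that of the generic fiber'' forces $z^0 \in B(G)$). Hence the hypothesis of the corollary implies $B(G) \neq \emptyset$.

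Then I would apply the contrapositive of Theorem~\ref{cidinhathuy2}: if $B(G) \neq \emptyset$, then for every admissible $\rho$ none of the groups $IH_2^{\overline t, c}({\mathcal{V}}_G(\rho),\R)$, $IH_2^{\overline t, cl}({\mathcal{V}}_G(\rho),\R)$, $H_2^{c}({\mathcal{V}}_G(\rho),\R)$, $H_2^{cl}({\mathcal{V}}_G(\rho),\R)$, $H_{2n-4}^{c}({\mathcal{V}}_G(\rho),\R)$, $H_{2n-4}^{cl}({\mathcal{V}}_G(\rho),\R)$ is trivial. The three conclusions (1), (2), (3) of the corollary — phrased without the $c$ or $cl$ superscript — then follow: when the variety ${\mathcal{V}}_G(\rho)$ is taken in its natural (e.g. compact, or compactly stratified) setting the ordinary homology and intersection homology coincide with the $c$-versions, so non-triviality of $H_2^c$, $H_{2n-4}^c$ and $IH_2^{\overline t,c}$ yields non-triviality of $H_2$, $H_{2n-4}$ and $IH_2^{\overline t}$ respectively.

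I do not expect any genuine obstacle here, since the corollary is purely a matter of combining two already-established results; the only point requiring a little care is the bookkeeping between the compact-support / closed-support versions of (intersection) homology appearing in Theorem~\ref{cidinhathuy2} and the unadorned notation used in the statement of the corollary, together with checking that the rank condition ``$\corank_{\C}(D\hat G_i) = \dim_{\C} V \le 1$'' is literally the hypothesis ``$\hat G^{-1}(0)$ has complex dimension one''. Once those identifications are spelled out, the proof is a two-line deduction.
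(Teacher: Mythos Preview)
Your proposal is correct and follows essentially the same route as the paper: identify the dimension-one hypothesis on $\hat G^{-1}(0)$ with the rank condition of Theorem~\ref{cidinhathuy2}, use Theorems~2.1 and~2.6 of \cite{VuiThang} to conclude $B(G)\neq\emptyset$ from the Euler-characteristic jump, and then apply the contrapositive of Theorem~\ref{cidinhathuy2}. The paper's proof is exactly this two-line deduction, without the extra commentary on the $c$/$cl$ superscripts.
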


\begin{proof} 
At first, since  the zero set 
$\{ z \in \C^n : \hat{G}_i (z) =0, i = 1, \ldots, n-1 \}$ has complex dimension one, then by the Theorem 2.6 in \cite{VuiThang}, any generic linear mapping $L$
 is  a very good projection with respect to any regular value $z^0$ of $G$. Then if the Euler characteristic of $G^{-1}(z^0)$ is bigger than that of the generic fiber, where $z^0 \in \C^{n-1}$, then by the Theorem 2.1 of \cite{VuiThang}, the set $B(G) \neq \emptyset$. 
Moreover, 
 the complex dimension of the set $\{ z \in \C^n : \hat{G}_i (z) =0, i = 1, \ldots, n-1 \}$ is the complex {\it corank} of ${(D \hat {G_i})}_{i=1,\ldots,n-1}$. Hence $\Rank_{\C} {(D \hat {G_i})}_{i=1,\ldots,n-1} = n-2$, and by the Theorem \ref{cidinhathuy2}, we finish the proof. 
\end{proof}

\begin{example}
{\rm Consider the suspension of the Broughton's example: 
$$G: \C^3 \to \C^2, \quad G(z, w, \zeta) = (z + z^2w, \zeta),$$
or, more general $G(z, w, \zeta) = (z + z^2w, g(\zeta))$ where $g(\zeta)$ is any polynomial of variable $\zeta$ and $g'(\zeta) \neq 0$. 
We can check that, for any function $\rho$, we have always $IH_2^{\overline{t}}({\mathcal{V}}_G, \R) \neq 0$.
}
\end{example}

\begin{remark}
{\rm  The condition $B(G) = \emptyset$ does not imply $IH_2^{\overline{t}}({\mathcal{V}}_G, \R) = 0$, since in this case ${\mathcal{S}}_G$ maybe not empty.
}
\end{remark}

\begin{example}
{\rm Let 
$$G: \C^3 \to \C^2, \quad G(z, w, \zeta) = (z, z\zeta^2 + w).$$

 1) If we choose the function $\rho = \vert \zeta \vert ^2,$
then  ${\mathcal{S}}_G = \emptyset$ and $IH_2^{\overline{t}}({\mathcal{V}}_G, \R) = 0$.

2) If we choose the function 
$\rho = \vert w \vert ^2,$
then  ${\mathcal{S}}_G \neq \emptyset$ and $IH_2^{\overline{t}}({\mathcal{V}}_G, \R) \neq 0$.
}
\end{example}

%
%
%
%
%

\bibliographystyle{plain}

\end{document}